\documentclass[12pt]{amsart}

\usepackage{amsmath, amssymb, amsthm}
\usepackage{mathtools}
\usepackage{hyperref}
\usepackage{geometry}
\usepackage{abstract}
\usepackage{xcolor}
\usepackage{appendix}

\theoremstyle{plain}
\newtheorem{theorem}{Theorem}[section]
\newtheorem{proposition}[theorem]{Proposition}
\newtheorem{lemma}[theorem]{Lemma}

\theoremstyle{definition}
\newtheorem{remark}[theorem]{Remark}
\newtheorem{example}{Example}[section]

\newcommand{\R}{\mathbb{R}}

\newcommand{\eps}{\varepsilon}

\begin{document}

\title[Upper Bound for the First \texorpdfstring{$p$-Steklov}{p-Steklov} Eigenvalue in \texorpdfstring{$\mathbb{R}^n$}{R\string^n}]{Upper bound for the first $p$-Steklov eigenvalue in  $\mathbb{R}^n$}

\author{Lili Wang}
\address{Lili Wang: School of Mathematics and Statistics, Key Laboratory of Analytical Mathematics and Applications (Ministry of Education), Fujian Key Laboratory of Analytical Mathematics and Applications (FJKLAMA), Fujian Normal University, 350117 Fuzhou, China}
\email{\href{mailto:liliwang@fjnu.edu.cn}{liliwang@fjnu.edu.cn}}

\author{Tao Wang}
\address{Tao Wang: Beijing International Center for Mathematical Research, Peking University, 100871 Beijing, China}
\email{\href{mailto:taowang25@pku.edu.cn}{taowang25@pku.edu.cn}}

\date{\today}
\subjclass[2020]{35P15, 35P30, 58J50, 47J10}
\keywords{Steklov problem, $p$-Laplacian, inverse mean curvature flow, Wentzell problem}

\maketitle

\begin{abstract}
For $p\in(1,n]$ and any bounded convex domain $\Omega\subset\mathbb{R}^n$, we prove the sharp inequality
\[
\Lambda_p(\Omega):=\frac{W_p(\Omega)}{P(\Omega)V(\Omega)^{p/n}}\geq\omega_n^{-p/n},
\qquad W_p(\Omega)=\int_{\partial\Omega}|x|^p\ dS,
\]
with equality holding exactly at centered balls. Combining this with the isoperimetric inequality yields the explicit upper bound
\[
\sigma_{1,p}(\Omega)\leq \frac{A(n,p)}{r(\Omega^*)^{p-1}},
\]
where $\Omega^*$ is a ball having the same perimeter as $\Omega$, and $A(n,p)=1$ for $1<p\leq 2$, $A(n,p)=n^{p/2-1}$ for $2<p\leq n$. When $p=2$, the result recovers the higher‑dimensional Weinstock inequality of Bucur et al.\ [J.\ Differential Geom.\ 2021].  We also obtain an explicit upper bound for the first Wentzell eigenvalue of the $p$-Laplacian on convex domains.
\end{abstract}

\section{Introduction}
\label{sec:intro}
The Steklov eigenvalue problem \cite{Steklov1902} arises from the Dirichlet--to--Neumann operator and has applications in spectral geometry and hydrodynamics. For a bounded Lipschitz domain $\Omega \subset \mathbb{R}^n$ ($n \geq 2$), the first non-zero Steklov eigenvalue of the Laplacian is given by
\begin{equation}\label{eq:sigma2}
\sigma_1(\Omega) = \min_{u \in H^1(\Omega)\setminus\{0\}} \left\{\frac{\displaystyle\int_{\Omega} |\nabla u|^2\ dx}{\displaystyle\int_{\partial \Omega} u^2\ dS} \;:\; \int_{\partial \Omega} u\ dS = 0 \right\}.
\end{equation}
Weinstock \cite{Weinstock1954,Weinstock1954tech} proved that among simply connected planar domains with fixed perimeter, the disk uniquely maximizes $\sigma_1(\Omega)$. Brock \cite{Brock2001} extended this to higher dimensions under volume normalization: for any bounded Lipschitz $\Omega$,
\[
\sigma_1(\Omega) V(\Omega)^{1/n} \leq \sigma_1(B) V(B)^{1/n},
\]
with equality exactly for balls, where $V(\Omega)$ is the volume of $\Omega$. When the surface area is fixed, the ball is not a global maximizer even within the class of contractible domains, as shown by Fraser and Schoen \cite{FraserSchoen2019}. However, Bucur, Ferone, Nitsch and Trombetti \cite{BucurFeroneNitschTrombetti2021} established that within the convex class the ball remains the unique maximizer, thus recovering a sharp Weinstock-type inequality for the perimeter constraint in all dimensions. Outside the convex class the ball is in general not a maximizer \cite{FraserSchoen2019}. For non-Euclidean settings, Gu, Li and Wan \cite{GLW2025} proved a sharp Weinstock-type inequality in hyperbolic space, while Li, Wang and Wu \cite{LWW2025} established the same on Hadamard manifolds with pinched negative curvature.

The nonlinear generalization of the Laplacian to the $p$-Laplacian, defined by $\Delta_p u = \operatorname{div}(|\nabla u|^{p-2}\nabla u)$ with $p>1$, is fundamental in the study of non-Newtonian fluids and in geometric analysis. For a bounded Lipschitz domain $\Omega$, the $p$-Steklov eigenvalue problem consists in finding $\sigma \in \mathbb{R}$ and a non-trivial function $u$ such that
\begin{equation}\label{eq:p-ste}
\begin{cases}
\Delta_p u = 0 & \text{in } \Omega,\\[4pt]
|\nabla u|^{p-2}\dfrac{\partial u}{\partial \nu} = \sigma |u|^{p-2}u & \text{on } \partial\Omega,
\end{cases}
\end{equation}
where $\nu$ is the outward unit normal. The first non-zero eigenvalue $\sigma_{1,p}(\Omega)$ admits the variational characterization
\begin{equation}\label{eq:rayleigh}
\sigma_{1,p}(\Omega) = \min_{u \in W^{1,p}(\Omega)\setminus\{0\}} \left\{\frac{\displaystyle\int_{\Omega} |\nabla u|^p\ dx}{\displaystyle\int_{\partial \Omega} |u|^p\ dS} \;:\; \int_{\partial \Omega} |u|^{p-2} u \ dS = 0 \right\}.
\end{equation}
Existence, simplicity, and isolation of the eigenvalues $\sigma_{k,p}(\Omega)$ follow from standard nonlinear spectral theory \cite{Le2006,MartinezRossi2002,Torne2005}.
For general $p$, Provenzano \cite{Provenzano2022} obtained geometric upper bounds for the normalized eigenvalues $P(\Omega)^{\frac{p-1}{n-1}}\sigma_{k,p}$ in terms of the
isoperimetric ratio when $p\leq n$, where $P(\Omega)=\int_{\partial\Omega}\ dS$ is the perimeter of $\Omega$(in dimension $2$, and for surface area measure in dimension $n$), and involving an additional boundary distortion term when $p>n$. Wang and Wang \cite{WangWang2025} have recently obtained two‑sided bounds for $\sigma_{1,p}$ via isocapacitary constants. Examples in \cite{Provenzano2022} indicate that when $p>n$ the ball is unlikely to be extremal for any universal upper bound. When $p\leq n$ the bounds do not involve the boundary distortion, yet they are known to be sharp only for $p=2$ \cite{BucurFeroneNitschTrombetti2021}. Verma \cite{Verma2020} obtained upper bounds for the first non-zero eigenvalues of the closed and Steklov $p$-Laplaician eigenvalue problems on Euclidean hypersurfaces, expressed in terms of the eigenvalues of a geodesic ball of the same volume. The present work is instead based on a sharp functional inequality for the quantity $\Lambda_p$, which we introduce below, and this inequality yields an upper bound directly in terms of the perimeter. In this paper we establish an explicit upper bound for $\sigma_{1,p}(\Omega)$ valid for every $p\in(1,n]$, which reduces to the sharp Weinstock inequality when $p=2$.
\begin{theorem}\label{thm:main-bound}
Let $\Omega\subset\mathbb{R}^n$ be a bounded convex domain, and let $\Omega^*$ be a ball having the same perimeter as $\Omega$. Then for every $p\in(1,n]$,
\[
\sigma_{1,p}(\Omega)\leq 
 \begin{cases}
  \frac{1}{r(\Omega^*)^{p-1}}, \qquad &\text{if } 1< p \leq 2, \\
  n^{\frac{p}{2} - 1}\frac{1}{r(\Omega^*)^{p-1}}, &\text{if } 2< p \leq n, 
 \end{cases}
\]
where $r(\Omega^*)$ is the radius of the ball $\Omega^*$. Furthermore, equality holds if and only if $\Omega$ is a ball and $p = 2$.
\end{theorem}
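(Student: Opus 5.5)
Take the coordinate functions $x_i$ as test functions in the Rayleigh quotient \eqref{eq:rayleigh}. First translate $\Omega$ so that the constraints hold for every $i$:
\[
\int_{\partial\Omega}|x_i|^{p-2}x_i\,dS=0,\qquad i=1,\dots,n.
\]
The map $a\mapsto \int_{\partial\Omega}|x_i-a_i|^{p-2}(x_i-a_i)\,dS$ is continuous and strictly decreasing in $a_i$, and it changes sign across the projection of $\Omega$. Each coordinate can therefore be fixed independently, so such a translation exists. Since $|\nabla x_i|=1$, each $x_i$ gives
\[
\sigma_{1,p}(\Omega)\int_{\partial\Omega}|x_i|^p\,dS\le V(\Omega).
\]
Summing over $i$ yields $\sigma_{1,p}\sum_i\int_{\partial\Omega}|x_i|^p\,dS\le nV(\Omega)$.

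Next compare $\sum_i|x_i|^p$ with $|x|^p$.
\begin{itemize}
\item For $1<p\le 2$, the power mean inequality, or the fact that $\ell^p$ norms dominate $\ell^2$ norms, gives $\sum_i|x_i|^p\ge |x|^p$.
\item For $2<p\le n$, Hölder gives $\sum_i|x_i|^p\ge n^{1-p/2}|x|^p$.
\end{itemize}
Hence
\[
\sigma_{1,p}(\Omega)\,W_p(\Omega)\le c\,nV(\Omega),
\]
where $c=1$ or $c=n^{p/2-1}$. Here $W_p$ is computed about the translated origin. The sharp inequality $\Lambda_p(\Omega)\ge\omega_n^{-p/n}$ stated in the abstract holds for any origin, so I would invoke it as the paper's main result and get $W_p\ge \omega_n^{-p/n}P(\Omega)V(\Omega)^{p/n}$. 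Therefore
\[
\sigma_{1,p}\le c\,n\,\omega_n^{p/n}\,\frac{V(\Omega)^{1-p/n}}{P(\Omega)}.
\]
Because $p\le n$, the exponent $1-p/n$ is nonnegative, so the isoperimetric inequality $V(\Omega)\le V(\Omega^*)$ applies. For the ball, $V(\Omega^*)=\omega_n r^n$ and $P=n\omega_n r^{n-1}$, with $r=r(\Omega^*)$. The right-hand side then becomes
\[
c\,n\,\omega_n^{p/n}(\omega_n r^n)^{1-p/n}/(n\omega_n r^{n-1})=c\,r^{p-n}\cdot r^{n-1}/r^{n-1}\cdots,
\]
and a short computation gives $c\,r^{1-p}$, which is the claimed bound.

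The main obstacle is the inequality $\Lambda_p\ge\omega_n^{-p/n}$ for convex bodies. If I had to prove it, I would run an inverse mean curvature flow from $\partial\Omega$, which is allowed since $\Omega$ is convex. Along the flow I would show that the scale-invariant quantity $W_p/(PV^{p/n})$ is monotone. The ingredients are
\[
\partial_t P=P,\qquad \partial_t V=\int_{\partial\Omega}H^{-1}\,dS,\qquad \partial_t W_p=\int_{\partial\Omega}\Bigl(|x|^p+p|x|^{p-2}\frac{\langle x,\nu\rangle}{H}\Bigr)dS.
\]
I would combine these with Heintze–Karcher, $\int_{\partial\Omega} H^{-1}\,dS\ge \frac{n}{n-1}V$, and the Minkowski formula to control the signs. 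Asymptotically, after rescaling the flow becomes round, and the ratio tends to the ball value $\omega_n^{-p/n}$. Alternatively, a direct argument is possible: use the support function $h=\langle x,\nu\rangle\le|x|$ with $nV=\int_{\partial\Omega} h\,dS$. Hölder then gives $nV\le W_p^{1/p}P^{1-1/p}$, so $W_p\ge n^pV^pP^{1-p}$. Finally the isoperimetric inequality, raised to a power valid for $p\le n$, converts this into $\omega_n^{-p/n}PV^{p/n}$. This route is elementary and does not use convexity, so I would try it first.

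For equality, all the inequalities above must be equalities. Equality in the comparison of $\sum_i|x_i|^p$ with $|x|^p$ on a ball forces $p=2$, since otherwise it fails on a set of positive measure. Equality in the isoperimetric inequality forces $\Omega$ to be a ball. Conversely, for $p=2$ and a ball, $\sigma_{1,2}=1/r$, so the bound is attained.
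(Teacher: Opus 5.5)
Your proof of the statement is correct and follows the paper's proof step for step. You translate to the coordinate-wise $p$-barycenter (Proposition~\ref{prop:prelim-barycenter}), test with each $y_i$ and sum, use $W_p\le A(n,p)\sum_i\int_{\partial\Omega_0}|y_i|^p\,dS$, apply Theorem~\ref{thm:Lambda-lower} to the translated domain, and finish with the isoperimetric inequality. Applying the theorem after translation is legitimate because it holds in any position. Deducing that $\Omega$ is a ball from equality in the isoperimetric step, rather than from equality in $\Lambda_p$, is an equally valid variant.

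One correction to your aside, which matters since you say you would try it first: the \emph{direct} proof of $\Lambda_p\ge\omega_n^{-p/n}$ does not work. From $nV=\int_{\partial\Omega}\langle x,\nu\rangle\,dS\le W_p^{1/p}P^{1-1/p}$ you only get $\Lambda_p(\Omega)\ge\bigl(nV^{(n-1)/n}/P\bigr)^p$. The isoperimetric inequality says this lower bound is \emph{at most} $\omega_n^{-p/n}$. Closing the argument would require $P\le n\omega_n^{1/n}V^{(n-1)/n}$, which is the reverse isoperimetric inequality.

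That the route never uses convexity is itself a warning, because the inequality fails for non-convex domains. Delete from $B_1$ a small set inside $B_\varepsilon$ whose boundary is corrugated with huge area $A$. Then $\limsup_{A\to\infty}\Lambda_p\le\varepsilon^p\bigl(\omega_n(1-\varepsilon^n)\bigr)^{-p/n}<\omega_n^{-p/n}$.

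Your IMCF-monotonicity sketch is also not what the paper does, and the paper establishes no such monotonicity. It only shows that the IMCF derivative of $\Lambda_p$ is negative when the excess $E_p<0$. The case $E_p\ge 0$ is handled separately by cutting a small cap near the farthest boundary point of a minimizer. Since your proof invokes Theorem~\ref{thm:Lambda-lower} rather than reproving it, none of this affects its validity.
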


\begin{remark}
The estimate in Theorem~\ref{thm:main-bound} can be rewritten as
\[
\sigma_{1,p}(\Omega) P(\Omega)^{\frac{p-1}{n-1}} \leq A(n,p) (n\omega_n)^{\frac{p-1}{n-1}},
\]
where $\omega_n$ is the Lebesgue measure of the unit ball in $\mathbb{R}^n$. When $p=2$, the right‑hand side equals the corresponding quantity for the ball, recovering the sharp Weinstock inequality. When $p\neq2$,  since the coordinate functions are no longer eigenfunctions, the first eigenvalue on the ball is strictly smaller than the upper bound. Note that for a convex domain $\Omega$, our results imply estimates in Verma \cite{Verma2020}.
\end{remark}

The proof of the main theorem relies on a sharp functional inequality for the scale invariant quantity $\Lambda_p$ that we now introduce.

Let $\Omega\subset\mathbb{R}^n$ be a bounded convex domain. Denote its volume by $V(\Omega)$, its perimeter by $P(\Omega)=\int_{\partial\Omega} dS$, where $dS$ denotes the $(n-1)$-dimensional Hausdorff measure on
$\partial\Omega$, and set
\begin{equation}\label{eq:Wp}
W_p(\Omega)=\int_{\partial\Omega}|x|^p\ dS.
\end{equation}
The relevant scale invariant functional $\Lambda_p$ is defined as
\begin{equation}\label{eq:Lambda}
\Lambda_p(\Omega)=\frac{W_p(\Omega)}{P(\Omega) V(\Omega)^{p/n}}.
\end{equation}

\begin{theorem}\label{thm:Lambda-lower}
Let $\Omega\subset\mathbb{R}^n$ be a bounded convex domain and $1<p\leq n$. Then
\[
\Lambda_p(\Omega)\geq\omega_n^{-p/n},
\]
with equality if and only if $\Omega$ is a ball centered at the origin.
\end{theorem}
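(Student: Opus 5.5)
The plan is to deform $\Omega$ by the inverse mean curvature flow (IMCF), show that $\Lambda_p$ is monotone nonincreasing along the flow, and identify the limit as the value $\omega_n^{-p/n}$ attained by centered balls. For a smooth strictly convex $\Omega$, let $\partial\Omega_t$ evolve by $\partial_t X=\nu/H$. By Gerhardt and Urbas the flow exists for all $t\ge0$ and stays smooth and strictly convex. Moreover $e^{-t/(n-1)}\partial\Omega_t$ converges smoothly to a round sphere. Since the rescaling factor tends to zero, any translation of the center is killed in the limit, so the limit sphere is centered at the origin. Because $\Lambda_p$ is scale invariant, $\Lambda_p(\Omega_t)\to\Lambda_p(B_1)=\omega_n^{-p/n}$. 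General bounded convex domains are handled by smooth strictly convex approximation, using continuity of $W_p,P,V$ under Hausdorff convergence of convex bodies.

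Writing $u=\langle x,\nu\rangle>0$ (we may assume $0\in\Omega$ after noting the claim is about a fixed origin; if $0\notin\Omega$ one first checks that translating toward the origin decreases $W_p$), the standard variation formulas give
\[
P'=P,\qquad V'=\int_{\partial\Omega_t}\frac{1}{H}\,dS,\qquad W_p'=W_p+p\int_{\partial\Omega_t}\frac{|x|^{p-2}u}{H}\,dS .
\]
Hence
\[
\frac{d}{dt}\log\Lambda_p=\frac{p}{W_p}\int\frac{|x|^{p-2}u}{H}-\frac{p}{nV}\int\frac1H ,
\]
and monotonicity reduces to the key inequality
\[
nV\int_{\partial\Omega}\frac{|x|^{p-2}u}{H}\,dS\le W_p\int_{\partial\Omega}\frac{1}{H}\,dS .
\]

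I expect proving this inequality to be the main obstacle. My first attempt would write $nV=\int u$ and view it as a Chebyshev-type (negative correlation) statement for the measure $dS/H$. The aim is to compare $\int u\cdot\int |x|^{p-2}u/H$ with $\int |x|^p\cdot\int 1/H$, using $u\le|x|$ to reduce to weights $|x|^{p-1}$ and $|x|^{p}$. For $p\le n$, I would combine this with Heintze--Karcher ($\int 1/H\ge \frac{n}{n-1}V$) and the Minkowski formula $\int Hu=(n-1)P$ to control the mixed terms. The restriction $p\le n$ should enter precisely in that Hölder/interpolation step.

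If a direct pointwise argument fails, the fallback is to replace IMCF by a normalized flow adapted to $|x|^p$. For example, take the flow $\partial_t X=(\frac{n-1}{H}-\frac{u}{n})\nu$, which preserves $P$ and increases $V$ via Heintze--Karcher. One then checks that $W_p$ decreases using the same correlation idea. Rigidity would follow by tracking equality: the derivative vanishes only if $u\equiv|x|$ and $H$ is constant, forcing $\partial\Omega$ to be a sphere centered at $0$.
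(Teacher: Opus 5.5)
\textbf{The gap is in the step you flagged, and it cannot be repaired: the inequality is false.} The bound
\[
nV\int_{\partial\Omega}\frac{|x|^{p-2}u}{H}\,dS\le W_p\int_{\partial\Omega}\frac{dS}{H}
\]
fails in general, so $\Lambda_p$ is not monotone along IMCF. The weight $dS/H$ can concentrate on any nearly flat piece of $\partial\Omega$. Heintze--Karcher and Minkowski only control integrals and cannot prevent this concentration. Your inequality would therefore force $|x|^{p-2}u\le W_p/(nV)$ on average over every nearly flat patch, and that is false.

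Here is a concrete counterexample.
\begin{itemize}
\item[(a)] Take a smooth strictly convex $\Omega\ni 0$ with positive excess $E_p(\Omega)=r_{\max}^{p-1}-W_p/(nV)>0$, e.g.\ a slightly eccentric centered ellipse as in the paper.
\item[(b)] Cut off the cap $\{\langle x,\nu_{\max}\rangle>r_{\max}-\delta\}$. On the new facet $F_\delta$ you have $u\equiv r_{\max}-\delta$ and $r_{\max}-\delta\le|x|\le r_{\max}$, hence $|x|^{p-2}u\ge r_{\max}^{p-1}-C\delta$.
\item[(c)] Meanwhile $W_p/(nV)$ changes only by $o(1)$ as $\delta\to0$. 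So for small $\delta$, $|x|^{p-2}u-W_p/(nV)\ge E_p(\Omega)/2$ on $F_\delta$.
\item[(d)] Replace $F_\delta$ by a spherical cap of curvature $\kappa$ and round the rim at a tiny scale. This gives smooth strictly convex bodies for which, as $\kappa\to0$, the integral $\int\frac1H\bigl(|x|^{p-2}u-\frac{W_p}{nV}\bigr)dS$ is dominated by the cap and is positive.
\end{itemize}
Thus $\Lambda_p$ strictly increases at $t=0$. Geometrically, IMCF pushes a nearly flat patch near $x_{\max}$ outward. This costs about $p\,r_{\max}^{p-1}$ in $W_p$ per unit of volume gained, which exceeds $\frac{p}{n}\frac{W_p}{V}$ exactly when $E_p>0$.

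Your fallback flow fails on the same family. For any speed containing $\frac{n-1}{H}$, the $1/H$-part of $\frac{W_p'}{W_p}-\frac{p}{n}\frac{V'}{V}$ equals $(n-1)p\bigl[\frac{1}{W_p}\int\frac{|x|^{p-2}u}{H}-\frac{1}{nV}\int\frac1H\bigr]$, and this tends to $+\infty$ along the family above. Separately, the perimeter-preserving flow is $\partial_tX=(\frac{n-1}{H}-u)\nu$, not the one with $u/n$.

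\textbf{What is missing} is a way to handle positive excess that is not a flow. The sign of the IMCF derivative is controlled only when $E_p<0$, through $|x|^{p-2}\langle x,\nu\rangle\le r_{\max}^{p-1}$. Your reduction via $u\le|x|$ is exactly this bound, and it closes only in that regime. The paper therefore argues variationally:
\begin{itemize}
\item[(a)] It first produces a minimizer of $\Lambda_p$, by translating to the $p$-barycenter, bounding diameters, and applying Blaschke selection.
\item[(b)] It excludes $E_p<0$ by a short-time IMCF comparison. This is where $p\le n$ is used: it makes $V^{p/n-1}$ nonincreasing along the flow.
\item[(c)] It excludes $E_p>0$, and $E_p=0$ for anything but a centered ball, by cutting a small cap at $x_{\max}$. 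This uses $\Delta W_p=p\,r_{\max}^{p-1}\Delta V+r_{\max}^p\Delta P+o(|\Delta V|+|\Delta P|)$ together with $W_p<r_{\max}^pP$ off centered balls.
\end{itemize}
Cap cutting is precisely the reverse of the deformation that breaks your monotonicity, which is why it works in the regime where IMCF does not.
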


\begin{remark}
For $p=2$ this is exactly the sharp inequality proved in \cite{BucurFeroneNitschTrombetti2021}.
\end{remark}

The sharp functional inequality in Theorem~\ref{thm:Lambda-lower} also yields an explicit upper bound for the first Wentzell eigenvalue. For $\beta\geq0$ the first non-zero Wentzell eigenvalue of the $p$-Laplacian is defined by
\begin{equation}\label{eq:Wentzell-def}
\mu_{1,p}(\Omega,\beta)=\inf_{v\in W^{1,p}(\Omega)\setminus\{0\}}\left\{\frac{\displaystyle\int_\Omega|\nabla v|^p dx+\beta\int_{\partial\Omega}|\nabla_\tau v|^p dS}{\displaystyle\int_{\partial\Omega}|v|^p dS}\ : \int_{\partial\Omega}|v|^{p-2}v\ dS=0\right\},
\end{equation}
where $\nabla_\tau v$ stands for the tangential gradient of $v$ on $\partial\Omega$. When $\beta=0$ the functional reduces to the $p$-Steklov quotient and $\mu_{1,p}(\Omega,0)=\sigma_{1,p}(\Omega)$, the first nontrivial $p$-Steklov eigenvalue studied above. For $p=2$ it was proved in \cite{BucurFeroneNitschTrombetti2021} that among convex domains of fixed volume the ball maximizes $\mu_{1,2}(\Omega,\beta)$, the sharp upper bound being
\begin{equation}\label{eq:Wentzell-p2}
\mu_{1,2}(\Omega,\beta)\leq\frac{1}{R}+\beta\frac{n-1}{R^2},\qquad R=\left(V(\Omega)/\omega_n\right)^{1/n},
\end{equation}
with equality exactly for balls centered at the origin.  

The same strategy, combined with Theorem~\ref{thm:Lambda-lower}, yields an explicit upper bound for every $p\in(1,n]$, with the tangential gradient term forcing a distinction between fixed volume and fixed perimeter. For fixed volume we obtain the following estimate.

\begin{theorem}\label{thm:Wentzell-V}
Let $\Omega\subset\mathbb{R}^n$ be a bounded convex domain, $p\in(1,n]$ and $\beta\geq 0$. Then
\[
\mu_{1,p}(\Omega,\beta)\leq 
 \begin{cases}
  \frac{1}{R^{p-1}}+\beta n\left(\frac{n-1}{n}\right)^{p/2}\frac{1}{R^p}, \quad &1<p \leq 2, \\
  n^{\frac{p}{2}-1}\left(\frac{1}{R^{p-1}}+\beta(n-1)\frac{1}{R^p}\right), &2<p \leq n,
 \end{cases}
\]
where $R$ is the radius of the ball centered at the origin with $V(\Omega)=\omega_n R^n$. Furthermore, equality holds if and only if $\Omega$ is a ball and $p = 2$.
\end{theorem}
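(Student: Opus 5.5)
The plan is to use translated coordinate functions as test functions in \eqref{eq:Wentzell-def}, exactly as in the $p=2$ argument of \cite{BucurFeroneNitschTrombetti2021}, and then control numerator and denominator by elementary pointwise inequalities together with Theorem~\ref{thm:Lambda-lower} and the isoperimetric inequality.

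\emph{Step 1 (centering).} For each $i$, the map $a\mapsto\int_{\partial\Omega}|x_i-a|^{p-2}(x_i-a)\,dS$ is continuous and strictly decreasing. It is positive for $a$ below $\min_{\partial\Omega}x_i$ and negative above $\max_{\partial\Omega}x_i$. Hence there is a unique $a_i$ at which it vanishes. After translating $\Omega$ by $a=(a_1,\dots,a_n)$ we may assume $\int_{\partial\Omega}|x_i|^{p-2}x_i\,dS=0$ for every $i$. This translation changes neither $\mu_{1,p}$, $V$ nor $P$, hence not $R$. Theorem~\ref{thm:Lambda-lower} holds for every convex domain regardless of position, so it applies to the translated one. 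Each $x_i$ is then admissible; it is smooth, so it lies in $W^{1,p}(\Omega)$ with tangential gradient on the Lipschitz boundary. This gives
\[
\mu_{1,p}\int_{\partial\Omega}|x_i|^p\,dS\le V(\Omega)+\beta\int_{\partial\Omega}|\nabla_\tau x_i|^p\,dS ,
\]
and we sum over $i$.

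\emph{Step 2 (pointwise estimates).} We have $|\nabla_\tau x_i|^2=1-\nu_i^2$ and $\sum_i(1-\nu_i^2)=n-1$.
\begin{itemize}
\item For $p\le 2$, concavity of $t\mapsto t^{p/2}$ gives $\sum_i(1-\nu_i^2)^{p/2}\le n\left(\frac{n-1}{n}\right)^{p/2}$.
\item For $p>2$, since $0\le 1-\nu_i^2\le 1$, we get $\sum_i(1-\nu_i^2)^{p/2}\le n-1$.
\item For the denominator, $\sum_i|x_i|^p\ge |x|^p$ when $p\le2$ (comparison of $\ell^p$ and $\ell^2$ norms).
\item Also $\sum_i|x_i|^p\ge n^{1-p/2}|x|^p$ when $p>2$ (power mean inequality).
\end{itemize}
Writing $c=1$, $K=n\left(\frac{n-1}{n}\right)^{p/2}$ for $p\le2$, and $c=n^{1-p/2}$, $K=n-1$ for $p>2$, the summed inequality becomes
\[
\mu_{1,p}\,c\,W_p(\Omega)\le nV(\Omega)+\beta K P(\Omega).
\]

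\emph{Step 3 (geometric inputs).} By Theorem~\ref{thm:Lambda-lower}, $W_p(\Omega)\ge P(\Omega)V(\Omega)^{p/n}\omega_n^{-p/n}=P(\Omega)R^p$. By the isoperimetric inequality, $P(\Omega)\ge n\omega_nR^{n-1}$, so $nV(\Omega)/P(\Omega)\le R$. Dividing by $cP(\Omega)R^p$ then yields
\[
\mu_{1,p}\le c^{-1}\left(R^{1-p}+\beta K R^{-p}\right),
\]
which is exactly the claimed bound in both ranges of $p$.

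\emph{Equality.} Equality forces equality in Theorem~\ref{thm:Lambda-lower}, so the (translated) $\Omega$ is a centered ball. It also forces $\sum_i|x_i|^p=c\,|x|^p$ for $dS$-a.e.\ $x$ on that sphere. For $p\ne2$ this fails on an open subset of the sphere: near the coordinate axes for $p>2$, and away from them for $p<2$. Hence $p=2$ and $\Omega$ is a ball. Conversely, for $p=2$ and $\Omega$ a ball, the bound reduces to \eqref{eq:Wentzell-p2}, which is attained. The main subtlety is not any single estimate but bookkeeping: the centering must be done coordinate-wise for the nonlinear constraint, and Theorem~\ref{thm:Lambda-lower} must be applied after the translation. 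Both are handled above.
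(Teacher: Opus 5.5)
Your proposal is correct and follows essentially the same route as the paper. Both arguments use coordinate-wise $p$-centering (Proposition~\ref{prop:prelim-barycenter}), testing with the coordinate functions, the concavity/power-mean bounds on $\sum_i(1-\nu_i^2)^{p/2}$ and $\sum_i|y_i|^p$, and then Theorem~\ref{thm:Lambda-lower} applied to the translated domain together with the isoperimetric inequality. Your treatment of the equality case, including the converse via \eqref{eq:Wentzell-p2}, is slightly more explicit than the paper's brief appeal to Theorem~\ref{thm:main-bound}.
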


\begin{remark}
For $p=2$ this reduce to the sharp estimate \eqref{eq:Wentzell-p2}. For $1<p<2$ the bound is new but not sharp: on a ball the true eigenvalue is strictly smaller because the coordinate functions are no longer eigenfunctions. As $\beta\to0^+$ the eigenvalue tends to the $p$-Steklov eigenvalue in Theorem~\ref{thm:main-bound}, while $\beta^{-1}\mu_{1,p}(\Omega,\beta)$ converges as $\beta\to+\infty$ to the first non‑zero eigenvalue of the boundary $p$-Laplace–Beltrami operator \cite{BucurFeroneNitschTrombetti2021}. The maximization of the latter under a surface area constraint is open for convex domains when $p\neq2$.
\end{remark}

\begin{remark}
Theorem~\ref{thm:main-bound} and Theorem~\ref{thm:Wentzell-V} also hold for smooth bounded star-shaped mean-convex domains. The proof is identical to that for convex domains, with the single exception that the lower bound $\Lambda_p(\Omega)\geq\omega_n^{-p/n}$ is obtained from the Kwong--Wei inequality. Indeed, For a smooth bounded star-shaped mean-convex domain, Kwong and Wei \cite[Theorem~1.1]{KwongWei2023} proved the following inequality
\[
\int_{\partial\Omega}|x|\ dS \geq \frac{n-1}{n}(n\omega_{n})^{-\frac{1}{n-1}}P(\Omega)^{\frac{n}{n-1}}+ V(\Omega).
\]
Combining the above inequality with Young's and H\"{o}lder's inequalities yields $W_p(\Omega)\geq\omega_n^{-p/n}P(\Omega)V(\Omega)^{p/n}$. The desired eigenvalue estimates then follow immediately. 
\end{remark}

The paper is organized as follows. In Section~\ref{sec:Lambda-lower} we introduce the functional $\Lambda_p$ and prove Theorem~\ref{thm:Lambda-lower}. Section~\ref{sec:preliminary} shows that after a suitable translation, the coordinate functions can serve as test functions for the $p$-Steklov problem. Section~\ref{sec:proof-main} is devoted to the proof of Theorem~\ref{thm:main-bound}. In Section~\ref{sec:Wentzell}, we adapt the analysis to Wentzell boundary conditions, proving Theorem~\ref{thm:Wentzell-V}. The detailed  derivation of the shape
derivative formula \eqref{eq:Lambda-prime} is given in Appendix~\ref{sec:appendix}.

\section{Proof of Theorem~\ref{thm:Lambda-lower}}
\label{sec:Lambda-lower}

In this section, we prove Theorem~\ref{thm:Lambda-lower}, following an approach similar to that of \cite{BucurFeroneNitschTrombetti2021}. We first recall some notation.

Let $\Omega\subset\mathbb{R}^n$ be a bounded convex domain. For any $p > 1$, consider the following scale invariant functional 
\[
 \Lambda_p(\Omega)= \frac{\displaystyle\int_{\partial\Omega}|x|^p\ dS}{P(\Omega)\ V(\Omega)^{p/n}} = \frac{W_p(\Omega)}{P(\Omega)\ V(\Omega)^{p/n}}.
\]
Let $B(r)$ be the ball of radius $r$ centered in the origin. One can check that $\Lambda_p(B(r)) = \omega_n^{-p/n}$.

\subsection{Existence of a minimizer}
\label{sec:exist}

In this subsection, we prove that for any $p>1$, $\Lambda_p(\cdot)$ admits a minimizer. We begin with the following proposition.

\begin{proposition}\label{prop:vector-barycenter}
Let $\Omega\subset\mathbb{R}^n$ be a bounded convex domain and let $p>1$. Then there exists a unique point $x_0\in\mathbb{R}^n$ such that
\begin{equation}\label{eq:vec-centroid}
\int_{\partial\Omega}|x-x_0|^{p-2}(x-x_0) dS = 0.
\end{equation}
Moreover, $x_0\in\Omega$, and the translated domain $\Omega_0 = \Omega - x_0$ satisfies $W_p(\Omega_0) \leq W_p(\Omega)$.
\end{proposition}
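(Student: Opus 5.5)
The natural approach is variational. Consider the function
\[
F(y)=\int_{\partial\Omega}|x-y|^p\,dS,\qquad y\in\mathbb{R}^n .
\]
Note that $F(y)=W_p(\Omega-y)$. For $p>1$ the map $z\mapsto|z|^p$ is $C^1$ on $\mathbb{R}^n$, with gradient $p|z|^{p-2}z$, and it is strictly convex. Since $\partial\Omega$ is compact and has finite $(n-1)$-measure, $F$ is finite and continuous. Differentiating under the integral sign, which is justified by dominated convergence because the integrand's gradient is locally bounded, gives
\[
\nabla F(y)=-p\int_{\partial\Omega}|x-y|^{p-2}(x-y)\,dS .
\]
Hence the points $x_0$ satisfying \eqref{eq:vec-centroid} are exactly the critical points of $F$.

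The plan then has three steps.

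\textbf{Existence.} $F$ is coercive, since $F(y)\geq P(\Omega)\,(|y|-\operatorname{diam}\Omega-|x_1|)^p$ for any fixed $x_1\in\partial\Omega$ and $|y|$ large. So a continuous $F$ attains a global minimum at some $x_0$, where $\nabla F(x_0)=0$.

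\textbf{Uniqueness.} $F$ is strictly convex as an integral of strictly convex functions of $y$ against a positive measure. Indeed, for $y_1\neq y_2$ the strict inequality $|x-\tfrac{y_1+y_2}{2}|^p<\tfrac12|x-y_1|^p+\tfrac12|x-y_2|^p$ holds for every $x$, and integrating over a set of positive measure keeps it strict. A strictly convex $C^1$ function has at most one critical point, which gives uniqueness. The inequality $W_p(\Omega_0)=F(x_0)\le F(0)=W_p(\Omega)$ is then immediate, because $x_0$ is the global minimizer.

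\textbf{$x_0\in\Omega$.} This is the only mildly delicate step, and I would argue by contradiction using a supporting hyperplane. Suppose $x_0\notin\Omega$. Since $\overline\Omega$ is convex and closed, there is a unit vector $e$ with $\langle x-x_0,e\rangle\ge0$ for all $x\in\overline\Omega$. If $x_0\notin\overline\Omega$, this can be sharpened to $>0$ by strict separation. If $x_0\in\partial\Omega$, use a supporting hyperplane at $x_0$.

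Pair \eqref{eq:vec-centroid} with $e$:
\[
0=\int_{\partial\Omega}|x-x_0|^{p-2}\langle x-x_0,e\rangle\,dS .
\]
The integrand is nonnegative, so it must vanish $dS$-a.e. Vanishing a.e. means that $\partial\Omega$ lies, up to a null set, in the hyperplane $\{\langle x-x_0,e\rangle=0\}$ (where $x\ne x_0$). This is impossible, because a bounded convex domain with nonempty interior has boundary of which only a face, not all, lies in one hyperplane. Concretely, the points of $\partial\Omega$ farthest in direction $e$ form a set of positive measure, since the projection of $\partial\Omega$ onto $e^\perp$ covers an open set twice. For $1<p<2$ the factor $|x-x_0|^{p-2}$ is singular at $x=x_0$, but it is integrable there because $p-2>-(n-1)$ for $n\ge2$. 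So the integrals are well defined and the argument is unaffected.
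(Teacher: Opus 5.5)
Your proposal is correct and follows essentially the same route as the paper: minimize the strictly convex, coercive function $F(y)=\int_{\partial\Omega}|x-y|^p\,dS$, identify \eqref{eq:vec-centroid} with $\nabla F(x_0)=0$, read off $W_p(\Omega_0)=F(x_0)\le F(0)=W_p(\Omega)$, and rule out $x_0\notin\Omega$ by pairing with a separating direction. Your projection argument justifies more carefully the step the paper only asserts, namely that a positive-measure part of $\partial\Omega$ lies strictly on one side of the hyperplane; phrase it via the upper endpoints of the chords of $\Omega$ parallel to $e$ (each such chord has at most one endpoint on the hyperplane), because ``the points farthest in direction $e$'' can be a single point, e.g.\ for a ball.
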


\begin{proof}
Define the strictly convex and coercive function
\[
F(y) = \int_{\partial\Omega} |x-y|^p \ dS, \qquad y\in\mathbb{R}^n.
\]
Since $F(y) \to +\infty$ as $|y|\to\infty$, $F$ admits a unique global minimizer $x_0\in\mathbb{R}^n$. The first-order condition $\nabla F(x_0)=0$ is exactly \eqref{eq:vec-centroid}, proving the existence and uniqueness.

We then show that $x_0\in\Omega$. Suppose $x_0 \notin \Omega$. By the Hahn--Banach theorem\cite[Theorem~1.7]{Brezis2011}, there exists a continuous linear functional $f(x) = \langle x, p \rangle$ with $p \in \mathbb{R}^n$ such that for every $x \in \Omega$, $f(x) < f(x_0)$. Since $\Omega$ is open and bounded, there exists $K \subset \partial \Omega$ with $\mathcal{H}^{n-1}(K) > 0$ such that for every $x \in K$, $f(x) < f(x_0)$. Consequently,
\[
0  = \left\langle \int_{\partial \Omega} |x - x_0|^{p-2} (x - x_0)\ dS, \  p \right\rangle \leq \int_K |x - x_0|^{p-2} \left(f(x) - f(x_0)\right) dS < 0,
\]
a contradiction, which shows that $x_0 \in \Omega$.

Finally, the inequality $W_p(\Omega_0) = F(x_0) \leq F(0) = W_p(\Omega)$ follows directly from the fact that $x_0$ minimizes $F$.
\end{proof}

As we are interested in minimal value of $\Lambda_p$, by Proposition~\ref{prop:vector-barycenter}, we can assume that $\Omega$ is in a position such that $\int_{\partial \Omega}|x|^{p-2}x dS = 0$. In particular, this implies that $0 \in \Omega$. Set $r_{\max}(\Omega)= \max\limits_{x\in\partial\Omega}|x|$ and choose a point $x_{\max}\in\partial\Omega$ with $|x_{\max}|=r_{\max}$. The excess of $\Omega$ is
\begin{equation}\label{eq:excess}
 E_p(\Omega) = r_{\max}^{p-1}(\Omega) - \frac{W_p(\Omega)}{n\ V(\Omega)}.
\end{equation}
One can check that $E_p(B(r)) = 0$.

\begin{proposition}\label{prop:exist}
 For any $p > 1$, there exists a convex set minimizing $\Lambda_p(\cdot)$.   
\end{proposition}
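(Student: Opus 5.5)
We use the direct method in the class of bounded convex domains, normalized by the translation of Proposition~\ref{prop:vector-barycenter} and by the scale invariance of $\Lambda_p$. Let $m=\inf\Lambda_p$ over bounded convex domains. Then $0<m\le\omega_n^{-p/n}$, the upper bound coming from balls. Take a minimizing sequence $\Omega_k$. By Proposition~\ref{prop:vector-barycenter}, replacing $\Omega_k$ by its translate $\Omega_k-x_0^{(k)}$ does not increase $W_p$ and leaves $P$ and $V$ unchanged. So we may assume $\int_{\partial\Omega_k}|x|^{p-2}x\,dS=0$, hence $0\in\Omega_k$. By scale invariance we may also normalize $V(\Omega_k)=\omega_n$.

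The key step, and the main obstacle, is a uniform bound on $\operatorname{diam}(\Omega_k)$, i.e.\ excluding degeneration to thin sets. We argue by contradiction. Suppose $r_{\max}(\Omega_k)\to\infty$ with $V=\omega_n$. John's ellipsoid gives an ellipsoid $E_k\subset\Omega_k\subset nE_k$ with semi-axes $a_1\le\dots\le a_n$, $a_n\to\infty$ and $\prod a_i\sim 1$. We aim to show $W_p(\Omega_k)/P(\Omega_k)\to\infty$, which forces $\Lambda_p\to\infty$.

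The first sub-step bounds the perimeter from above. By monotonicity of perimeter under inclusion of convex sets, $P(\Omega_k)\le P(nE_k)\lesssim a_n a_{n-1}\cdots a_2$.

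The second sub-step bounds $W_p$ from below, using a long segment through $\Omega_k$ along the $a_n$-direction. Convexity together with the containment $\Omega_k\supset E_k$ shows that a fixed proportion of the boundary measure (comparable to $a_2\cdots a_n$) lies at distance $\gtrsim a_n$ from the origin. Here we use that $0$ is a ``$p$-barycenter'' of $\partial\Omega_k$. Indeed, the balancing condition $\int|x|^{p-2}x\,dS=0$ prevents the origin from sitting near an end, so both caps $\{x_n\gtrsim a_n/4\}$ and $\{x_n\lesssim -a_n/4\}$ carry boundary mass. A cleaner route is $\int_{\partial\Omega}|x|^p\,dS\ge \int_{\partial\Omega}|x\cdot e|^p\,dS$ for a unit vector $e$ along the long axis. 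Slicing $\Omega$ perpendicular to $e$, each slice of $(n-1)$-measure $A(t)$ contributes boundary measure at least $\sim dt$ times the slice perimeter. Combined with the centering, this gives $W_p\gtrsim a_n^p\,a_2\cdots a_n$.

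Hence $\Lambda_p(\Omega_k)\gtrsim a_n^p\to\infty$, contradicting minimality. If this slicing estimate proves delicate, the fallback is to work with the excess $E_p$: show $W_p\ge c\,r_{\max}^{p-1}V$ via the cone decomposition $nV=\int_{\partial\Omega}x\cdot\nu\,dS$.

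Once $\Omega_k\subset B(0,R_0)$ uniformly, Blaschke selection gives a subsequence converging in Hausdorff distance to a convex body $\Omega$. The volume normalization $V(\Omega)=\omega_n$ survives the limit, so $\Omega$ has nonempty interior. Along Hausdorff-convergent convex bodies, $V$ and $P$ are continuous. $W_p$ is also continuous, because the surface measures $\mathcal H^{n-1}\llcorner\partial\Omega_k$ converge weakly (via radial parametrization from an interior point, using Lipschitz graphs over the sphere) and $|x|^p$ is continuous and bounded on $B(0,R_0)$. Therefore $\Lambda_p(\Omega)=\lim\Lambda_p(\Omega_k)=m$, and $\Omega$ is a minimizer.
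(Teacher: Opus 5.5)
Your proposal is correct and follows the paper's route: translate by Proposition~\ref{prop:vector-barycenter}, normalize $V=\omega_n$, exclude diameter blow-up by showing $W_p/P\gtrsim \operatorname{diam}^p$, and conclude with Blaschke selection and continuity of $V$, $P$, $W_p$. Your John-ellipsoid bounds $P\lesssim a_2\cdots a_n$ and $W_p\gtrsim a_n^p\,a_2\cdots a_n$ make quantitative a step the paper only asserts. As literally written there, with distance $\frac12\operatorname{diam}$, that assertion fails for centered boxes, so a smaller constant is needed.

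Your slicing estimate does not need the centering. For $t$ in an interval of length $a_n$ around the center of $E_k$ along the long axis, the slices of $\Omega_k$ contain slices of $E_k$ of $(n-2)$-perimeter $\gtrsim a_2\cdots a_{n-1}$. At least half of that interval has $|t|\ge a_n/4$, and the coarea formula on $\partial\Omega_k$ then gives the bound. Your fallback $W_p\ge c\,r_{\max}^{p-1}V$ would not suffice on its own, since $P(\Omega_k)\to\infty$ as well and what is needed is $W_p/P\to\infty$.
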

\begin{proof}
The proof is similar to that for the case $p = 2$. Applying Proposition~\ref{prop:vector-barycenter} to every element of a minimizing sequence $\{\Omega_j\}_{j \in \mathbb{N}}$, we obtain a new sequence $\{\widetilde\Omega_j\}_{j\in\mathbb{N}}$ with the additional property that $0\in\widetilde\Omega_j$ for all $j$. Moreover, by translation invariance of $P$ and $V$ and the inequality $W_p(\widetilde\Omega_j)\leq W_p(\Omega_j)$, we have $\Lambda_p(\widetilde\Omega_j)\leq\Lambda_p(\Omega_j)$. Hence the new sequence is also a minimizing sequence.

After a harmless scaling, we may also assume that $V(\widetilde\Omega_j)=\omega_n$, as $\Lambda_p(\cdot)$ is scale invariant. We claim that the diameters of $\widetilde\Omega_j$ are uniformly bounded. Indeed, if a subsequence had $\operatorname{diam}(\widetilde\Omega_j)\to\infty$, convexity and the fixed volume would force $P(\widetilde\Omega_j)\to\infty$ (see, e.g., \cite[Lemma~4.1]{EspositoFuscoTrombetti2005} or \cite{BettaBrockMercaldoPosteraro1999}). At the same time, since each $\widetilde\Omega_j$ contains the origin, a fixed fraction of its boundary lies at distance at least $\frac12\operatorname{diam}(\widetilde\Omega_j)$ from the origin. Consequently, there exists some constant $c=c(n,p)>0$, independent of $j$, such that $W_p(\widetilde\Omega_j)/P(\widetilde\Omega_j)\geq c\ \left(\operatorname{diam}(\widetilde\Omega_j)\right)^p$. Combining this with $P(\widetilde\Omega_j)\to\infty$ forces $\Lambda_p(\widetilde\Omega_j)\to\infty$, contradicting the fact that the sequence is minimizing. Hence the diameters are uniformly bounded.

The Blaschke selection theorem \cite[Theorem 1.8.6]{Rolf2014} then yields a subsequence converging in the Hausdorff distance to a compact convex set $\Omega$, which by continuity of $V$, $P$ and $W_p$ is a minimizer of $\Lambda_p(\cdot)$.
\end{proof}

\subsection{A minimizer cannot have negative excess}
\label{sec:neg}

In this subsection, we show that a minimizer of $\Lambda_p(\cdot)$ cannot have negative excess. We first observe that there exist convex sets with negative excess.
\begin{example}
Let
\[
\mathcal{E}_{\varepsilon}:=\left\{(x,y)\in\mathbb{R}^2:\frac{x^2}{\varepsilon^2}+\varepsilon^2y^2=1\right\},
\]
where $\varepsilon > 0$ is small. Then $r_{\max}(\mathcal{E}_{\varepsilon})=1/\varepsilon$, $V(\mathcal{E}_{\varepsilon})=\pi$, and
\[
E_p(\mathcal{E}_{\varepsilon})=\frac{1}{\varepsilon^{p-1}}-\frac{W_p(\mathcal{E}_{\varepsilon})}{2\pi}.
\]
Parameterizing $\partial\mathcal{E}_{\varepsilon}$ by $x=\varepsilon\cos\theta$, $y=\varepsilon^{-1}\sin\theta$ ($\theta\in[0,2\pi)$), we have
\[
|x|^p = (\varepsilon^2\cos^2\theta+\varepsilon^{-2}\sin^2\theta)^{p/2},\qquad dS = \sqrt{\varepsilon^2\sin^2\theta+\varepsilon^{-2}\cos^2\theta}\ d\theta.
\]
Thus,
\begin{align*}
W_p(\mathcal{E}_{\varepsilon}) &=\int_0^{2\pi}(\varepsilon^2\cos^2\theta+\varepsilon^{-2}\sin^2\theta)^{p/2}\sqrt{\varepsilon^2\sin^2\theta+\varepsilon^{-2}\cos^2\theta}\ d\theta\\
&\geq \int_0^{2\pi}\frac{1}{\varepsilon^p} |\sin\theta|^p\frac{1}{\varepsilon}|\cos \theta|\ d\theta\\
&=\frac{4\int_0^\frac{\pi}{2}\sin^p\theta\cos\theta\ d\theta}{\varepsilon^{p+1}}=\frac{4}{(p+1)\varepsilon^{p+1}}.
\end{align*}
Hence,
\begin{equation*}
E_p(\mathcal{E}_{\varepsilon}) \leq\frac{1}{\varepsilon^{p-1}} \left(1-\frac{2}{(p+1)\varepsilon^2\pi}\right)<0  
\end{equation*}
for $\varepsilon$ sufficiently small.
\end{example}

When $\Omega$ is a bounded open convex set with $C^2$ boundary, we denote by $\nu(x)$ the outward unit normal at $x\in\partial\Omega$ and by $H_{\partial\Omega}(x)$ the mean curvature, defined as the average of the principal curvatures of $\partial\Omega$ with respect to $-\nu$. 

Let $\Omega$ be a bounded open convex set with $C^2$ boundary. For a smooth vector field $\theta\in C^\infty(\mathbb{R}^n,\mathbb{R}^n)$, set $\Phi_s(x)=x+s\theta(x)$ and $\Omega_s=\Phi_s(\Omega)$ for small $|s|$. The directional derivative of $\Lambda_p$ at $\Omega$ in the direction $\theta$ is defined by the limit
\[
\Lambda_p'(\Omega)[\theta] = \frac{d}{ds}\bigg|_{s=0}\Lambda_p(\Omega_s) = \lim_{s\to 0}\frac{\Lambda_p(\Omega_s)-\Lambda_p(\Omega)}{s}.
\]
Applying the classical Hadamard shape derivative formulas to the volume and boundary integrals that appear in $\Lambda_p$, together with the quotient rule, one obtains (see, e.g., \cite[Theorem~5.2.2]{HenrotPierre2005}, \cite[Section~2.11]{SZ92} for the differentiation of volume and boundary integrals)
\begin{equation}\label{eq:Lambda-prime}
\begin{split}
\Lambda_p'(\Omega)[\theta]
= \frac{1}{P(\Omega) V(\Omega)^{p/n}} \int_{\partial\Omega}
   &\left[p\varphi\left(|x|^{p-2}\langle x,\nu\rangle - \frac{W_p(\Omega)}{n V(\Omega)}\right)\right. \\
  &+\left.(n-1)H_{\partial\Omega} \varphi\left(|x|^p - \frac{W_p(\Omega)}{P(\Omega)}\right)\right]\ dS,
\end{split}
\end{equation}
where $\varphi(x)=\langle\theta(x),\nu(x)\rangle$ for $x\in\partial\Omega$. We give a detailed derivation of equation \eqref{eq:Lambda-prime} in the appendix.

Let $\mathcal{K}^{\infty,+}$ denote the class of compact convex subsets of $\mathbb{R}^n$ with nonempty interior, $C^\infty$ boundary, and strictly
positive principal curvatures. For $\Omega\in\mathcal{K}^{\infty,+}$, the inverse mean curvature flow (IMCF) produces a smooth family $\{\Omega_t\}_{t\geq 0}\subset\mathcal{K}^{\infty,+}$ whose boundary moves with normal velocity $1/H_{\partial\Omega_t}$. This flow exists for all $t\geq0$ and preserves the class $\mathcal{K}^{\infty,+}$
\cite{Gerhardt1990,Urbas1990}. Substituting $\theta = \frac{1}{H_{\partial\Omega}}\nu$ into \eqref{eq:Lambda-prime} and noting that the second integral vanishes since
$\int_{\partial\Omega}\left(|x|^p - \frac{W_p(\Omega)}{P(\Omega)}\right)\ dS = 0$, we obtain
\begin{equation}\label{eq:Lambda-IMCF}
\frac{d}{dt}\bigg|_{t=0} \Lambda_p(\Omega_t)
= \frac{p}{P(\Omega) V(\Omega)^{p/n}}\int_{\partial\Omega} \frac{1}{H_{\partial\Omega}}
   \left(|x|^{p-2}\langle x,\nu\rangle - \frac{W_p(\Omega)}{n V(\Omega)}\right)\ dS.
\end{equation}
Since $|x|^{p-2}\langle x,\nu\rangle \leq |x|^{p-1} \leq r_{\max}^{p-1}$ pointwise on $\partial\Omega$, the integrand is bounded above by $E_p(\Omega)$. The sign of the derivative is therefore governed by the excess.

\begin{proposition}\label{prop:negative-excess}
Let $\Omega\subset\mathbb{R}^n$ be a bounded convex domain and $1<p\leq n$. If $E_p(\Omega)<0$, then $\Omega$ cannot be a minimizer of $\Lambda_p$ among convex domains.
\end{proposition}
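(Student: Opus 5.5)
Our plan is to follow the strategy of \cite{BucurFeroneNitschTrombetti2021}: approximate $\Omega$ by smooth strictly convex sets, run the inverse mean curvature flow for a short time, and use \eqref{eq:Lambda-IMCF} to show that $\Lambda_p$ strictly decreases. This produces a convex competitor with a smaller value than $\Lambda_p(\Omega)$.

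First we normalize. Since $\Lambda_p$ and the sign of $E_p$ are unaffected by scaling, and the minimum is insensitive to the translation in Proposition~\ref{prop:vector-barycenter}, we keep $\Omega$ in its given position. Suppose $\Omega$ is a minimizer with $E_p(\Omega)=-2\delta<0$.

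Next we approximate. Take $\Omega^k\in\mathcal{K}^{\infty,+}$ with $\Omega^k\to\Omega$ in the Hausdorff distance, for instance by smoothing the support function and adding $\varepsilon|\cdot|$. Then $V$, $P$ and $W_p$ converge, because these are continuous under Hausdorff convergence of convex bodies. The quantity $r_{\max}$ also converges. Hence $E_p(\Omega^k)\le -\delta$ for $k$ large, and $\Lambda_p(\Omega^k)\to\Lambda_p(\Omega)$.

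Now we run IMCF from $\Omega^k$, obtaining $\Omega^k_t\in\mathcal K^{\infty,+}$. By \eqref{eq:Lambda-IMCF} and the pointwise bound $|x|^{p-2}\langle x,\nu\rangle-\frac{W_p}{nV}\le E_p$, we get
\[
\frac{d}{dt}\Lambda_p(\Omega^k_t)\le \frac{p\,E_p(\Omega^k_t)}{P(\Omega^k_t)V(\Omega^k_t)^{p/n}}\int_{\partial\Omega^k_t}\frac{dS}{H}
\]
at each time, as long as $E_p(\Omega^k_t)<0$. Since $E_p<0$, the right side is strictly negative. In fact $\int 1/H\,dS\ge P^2/\int H\,dS$ by Cauchy--Schwarz, so it is bounded away from zero quantitatively.

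The main obstacle is uniformity in $k$. We need a time $t_0>0$ and a constant $c>0$, both independent of $k$, with $\Lambda_p(\Omega^k_{t_0})\le\Lambda_p(\Omega^k)-c$. Otherwise the gain could vanish in the limit; this happens if $H$ blows up on the approximants, since $\Omega$ itself may be nonsmooth. We would handle this in three steps.

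First, use the inequality in integrated form:
\[
\Lambda_p(\Omega^k_t)-\Lambda_p(\Omega^k)\le \int_0^t \frac{p\,E_p}{PV^{p/n}}\int\frac{dS}{H}\,ds.
\]

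Second, lower-bound the integrand. Because $\int H\,dS$ is a quermassintegral, it is controlled by $\Omega$ alone, uniformly in $k$. Cauchy--Schwarz then gives a uniform positive lower bound for $\int 1/H\,dS$.

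Third, control the flow uniformly. Along IMCF we have $P(\Omega_t)=e^tP(\Omega_0)$, and there are standard estimates (e.g.\ Gerhardt, Urbas) on the growth of volume and inradius/outradius. Together these keep $E_p(\Omega^k_s)\le-\delta/2$ and $P$, $V$ bounded for $s\in[0,t_0]$, with $t_0$ uniform. In particular $r_{\max}$ and $W_p/(nV)$ move only by $O(t_0)$, since $\Omega_t\subset e^{t/(n-1)}$-dilates of a fixed ball.

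Finally, letting $k\to\infty$ gives $\inf\Lambda_p\le\Lambda_p(\Omega)-c<\Lambda_p(\Omega)$, a contradiction. The restriction $p\le n$ is not essential in this step, beyond the standing assumptions.
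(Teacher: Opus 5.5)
You have the right overall scheme. The pointwise bound that turns \eqref{eq:Lambda-IMCF} into $\frac{d}{dt}\Lambda_p\le pE_p\,(PV^{p/n})^{-1}\int H^{-1}\,dS$ is correct. So is your uniform lower bound on $\int_{\partial\Omega^k_t}H^{-1}\,dS$, via Cauchy--Schwarz and monotonicity of the quermassintegral $\int H\,dS$ under inclusion in a ball (Ros' inequality $\int H^{-1}dS\ge nV$, which the paper uses, is quicker).

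The gap is your third step, which is exactly the obstacle you flagged. To keep $E_p(\Omega^k_s)\le-\delta/2$ on a $k$-independent interval $[0,t_0]$, you need $V(\Omega^k_s)$ to stay close to $V(\Omega^k)$, and $W_p(\Omega^k_s)$ not to drop, uniformly in $k$.
\begin{itemize}
\item Containment of $\Omega^k_t$ in a dilate of a fixed ball only bounds $r_{\max}$ from above. It says nothing about $V(\Omega^k_t)-V(\Omega^k)$, since that ball's volume exceeds $V(\Omega)$ by a fixed amount.
\item The Gerhardt--Urbas estimates depend on $C^2$ bounds and on $\min H$ of the initial hypersurface, and these degenerate as $\Omega^k\to\Omega$ when $\Omega$ is nonsmooth.
\item Indeed $\frac{d}{dt}V=\int_{\partial\Omega^k}H^{-1}\,dS$ at $t=0$ typically blows up as $k\to\infty$ (smoothed faces of a cube have $H\to0$). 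So no derivative bound can give ``moves only by $O(t_0)$''.
\end{itemize}
As written, the claim that carries the whole argument is unsupported.

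The missing ingredient is the comparison (avoidance) principle for IMCF, applied to a \emph{nested} outer approximation. This is why the paper takes $\Omega\subset\Omega_{k+1}\subset\Omega_k$. For $k\ge k_0$ and $0\le t\le T$ one then has $\Omega\subset\Omega_k\subset\Omega_k(t)\subset\Omega_{k_0}(T)$. Choosing $k_0$ large and then $T$ small makes $d_H(\Omega_{k_0}(T),\Omega)$ small, because $\Omega_{k_0}(t)$ is a single smooth flow. This is what stands behind the paper's uniform choice $V_k(T)<V(\Omega)+\delta$ for all $k\ge k_0$. Without nesting, you would instead need outer spherical barriers plus a compactness argument over normal directions.

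The paper also avoids any lower bound on $W_p$ along the flow by invoking minimality: $W_p(\Omega_k(t))\ge\Lambda_p(\Omega)P(\Omega)V_k(t)^{p/n}$. This is where $p\le n$ enters, through monotonicity of $V^{p/n-1}$.

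If you add nesting and the comparison principle, your version closes directly by continuity of $V,P,W_p,r_{\max}$ on the squeezed sets. As you say, it would then not need $p\le n$. Without it, the proof has a hole at exactly the step you identified as the main obstacle.
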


\begin{proof}
We argue by contradiction, following the approach of Bucur et al.~\cite{BucurFeroneNitschTrombetti2021}. Assume that $\Omega$ is a minimizer with $E_p(\Omega)<0$.

Suppose first that $\Omega\in\mathcal{K}^{\infty,+}$.  Under the IMCF, the right‑hand side of \eqref{eq:Lambda-IMCF} is strictly negative since the integrand is pointwise bounded above by $E_p(\Omega)<0$ and $\frac{1}{H_{\partial\Omega}}>0$. This contradicts the minimality of $\Omega$.

If $\Omega\notin\mathcal{K}^{\infty,+}$, take a sequence $\{\Omega_k\}_k\subset\mathcal{K}^{\infty,+}$ with $\Omega_{k+1}\subset\Omega_k$ and $\Omega_k\to\Omega$ in the Hausdorff
distance. On the class of convex sets, $V$, $P$, $W_p$ and $r_{\max}$ are continuous with respect to Hausdorff convergence \cite[Lemma~2.1]{BucurFeroneNitschTrombetti2021}. Hence
\begin{equation}\label{eq:conv}
\begin{aligned}
\lim_{k\to\infty}V(\Omega_k)&=V(\Omega), &
\lim_{k\to\infty}P(\Omega_k)&=P(\Omega),\\
\lim_{k\to\infty}W_p(\Omega_k)&=W_p(\Omega), &
\lim_{k\to\infty}r_{\max}(\Omega_k)&=r_{\max}(\Omega).
\end{aligned}
\end{equation}

For each $k$, let $\Omega_k(t)$ be the IMCF starting from $\Omega_k$, so that $\Omega_k(0)=\Omega_k$ and the flow exists for all $t\geq 0$. The classical evolution laws
\begin{align}
\frac{d}{dt}V(\Omega_k(t)) &= \int_{\partial\Omega_k(t)}\frac{1}{H_{\partial\Omega_k(t)}}\ dS,
\label{eq:Vdot}\\[2pt]
\frac{d}{dt}P(\Omega_k(t)) &= (n-1)P(\Omega_k(t)),
\label{eq:Pdot}\\[2pt]
\frac{d}{dt}r_{\max}(\Omega_k(t)) &\leq r_{\max}(\Omega_k(t))
\label{eq:rmax}
\end{align}
hold for all $t\geq0$ \cite{Gerhardt1990,Urbas1990}. From \eqref{eq:rmax} we obtain the integrated estimate
\begin{equation}\label{eq:rmax-exp}
r_{\max}(\Omega_k(t)) \leq r_{\max}(\Omega_k)e^{t}.
\end{equation}

We now apply the shape derivative formula to the IMCF at time $t$. For the IMCF the velocity field is $\theta = \frac{1}{H_{\partial\Omega_k(t)}}\nu$, and the second integral in \eqref{eq:Lambda-prime} vanishes. Set $V_k(t)=V(\Omega_k(t))$, $P_k(t)=P(\Omega_k(t))$ and $W_k(t)=W_p(\Omega_k(t))$. We obtain
\begin{equation}\label{eq:Lambda-prime-t}
\frac{d}{dt}\Lambda_p(\Omega_k(t))
= \frac{p}{P_k(t) V_k(t)^{p/n}}
   \int_{\partial\Omega_k(t)} \frac{1}{H_{\partial\Omega_k(t)}}
   \left(|x|^{p-2}\langle x,\nu\rangle - \frac{W_k(t)}{n V_k(t)}\right)\ dS.
\end{equation}
The pointwise bound $|x|^{p-2}\langle x,\nu\rangle \leq r_{\max}^{p-1}(\Omega_k(t))$ together with \eqref{eq:rmax-exp} gives
\[
|x|^{p-2}\langle x,\nu\rangle \leq r_{\max}^{p-1}(\Omega_k) e^{(p-1)t}.
\]

Since $\Omega$ is a minimizer, $\Lambda_p(\Omega) \leq \Lambda_p(\Omega_k(t))$. By the definition of $\Lambda_p$, this is equivalent to
\[
\frac{W_k(t)}{P_k(t)} \geq \Lambda_p(\Omega) V_k(t)^{p/n}.
\]
The perimeter is nondecreasing along the IMCF by \eqref{eq:Pdot}, and the nested inclusion of the approximating sequence guarantees $P_k(t) \geq P_k(0) = P(\Omega_k) \geq P(\Omega)$. Therefore
\[
W_k(t) \geq \Lambda_p(\Omega) V_k(t)^{p/n} P(\Omega).
\]
Inserting these estimates into \eqref{eq:Lambda-IMCF} and using $V_k'(t) = \frac{d}{dt}V(\Omega_k(t))$, we arrive at
\begin{equation}\label{eq:Ldot}
\frac{d}{dt}\Lambda_p(\Omega_k(t))
\leq \frac{p\ V_k'(t)}{P(\Omega_k) V(\Omega_k)^{p/n}}
   \left(r_{\max}^{p-1}(\Omega_k) e^{(p-1)t}
        - \frac{\Lambda_p(\Omega) P(\Omega)}{n V_k(t)^{1-p/n}}\right).
\end{equation}

We now turn to the key differential inequality that leads to a contradiction. From \eqref{eq:Ldot} and the monotonicity of the volume along the flow we obtain, for any $0<t<T$, since $1<p\leq n$,
\[
r_{\max}^{p-1}(\Omega_k) e^{(p-1)t}
- \frac{\Lambda_p(\Omega)P(\Omega)}{n V_k(t)^{1-p/n}}
\leq r_{\max}^{p-1}(\Omega_k) e^{(p-1)T}
- \frac{\Lambda_p(\Omega)P(\Omega)}{n V_k(T)^{1-p/n}} .
\]
Integrating \eqref{eq:Ldot} from $0$ to $T$ and using the above monotonicity gives
\begin{equation}\label{eq:Lambda-diff}
\begin{aligned}
\Lambda_p(\Omega_k(T))-\Lambda_p(\Omega_k)
\leq \frac{p\left(V_k(T)-V_k(0)\right)}{P(\Omega_k) V(\Omega_k)^{p/n}}
&\Bigl(r_{\max}^{p-1}(\Omega_k) e^{(p-1)T} \\
      &- \frac{\Lambda_p(\Omega)P(\Omega)}{n V_k(T)^{1-p/n}}\Bigr).
\end{aligned}
\end{equation}

Since $E_p(\Omega)<0$, the continuous function
\[
\phi(v)=r_{\max}^{p-1}(\Omega)-\frac{W_p(\Omega)}{n v^{1-p/n}}
\]
is strictly negative at $v=V(\Omega)$. By continuity, $\phi(v)<0$ for all $v$ sufficiently close to $V(\Omega)$. Together with the convergences $V(\Omega_k)\to V(\Omega)$ and
$r_{\max}(\Omega_k)\to r_{\max}(\Omega)$, we can pick $T>0$ small enough and an integer $k_0$ such that
\[
V_k(T) < V(\Omega)+\delta,\qquad
r_{\max}^{p-1}(\Omega_k) e^{(p-1)T} - \frac{\Lambda_p(\Omega)P(\Omega)}{n(V(\Omega) + \delta)^{1-p/n}} < 0
\]
for all $k\geq k_0$. Applying Ros' inequality \cite[Theorem 1]{Ros1987} in $\mathbb{R}^n$, we obtain $V_k'(t)\geq n V_k(t)$, which implies
\[
V_k(T)-V_k(0) \geq V(\Omega_k)(e^{nT}-1) > 0.
\]

Substituting these estimates into \eqref{eq:Lambda-diff} and passing to the limit yields
\[
\lim_{k \to \infty}\Lambda_p(\Omega_k(T))-\Lambda_p(\Omega) < 0.
\]
The sequence $\{\Omega_k(T)\}$ is uniformly bounded, so a subsequence converges in the Hausdorff distance to a convex set $\widetilde\Omega$. By the continuity of $\Lambda_p$, we obtain $\Lambda_p(\widetilde\Omega) < \Lambda_p(\Omega)$, contradicting the minimality of $\Omega$.
\end{proof}

\subsection{A minimizer cannot have positive excess}
\label{sec:pos}
In this subsection we show that a bounded convex domain $\Omega$ with $E_p(\Omega) > 0$ cannot be a minimizer of $\Lambda_p(\cdot)$. We first observe that there exists sets having positive excess.
\begin{example}
Let $p>1$ and consider the ellipse
\[
\mathcal{E}_\varepsilon = \left\{(x,y)\in\mathbb{R}^2 : \frac{x^2}{(1+\varepsilon)^2}+\frac{y^2}{(1-\varepsilon)^2}=1 \right\},
\qquad 0<\varepsilon\ll 1.
\]
Using the standard parametrization $x=(1+\varepsilon)\cos\theta$ and $y=(1-\varepsilon)\sin\theta$, we have
\[
V(\mathcal{E}_\varepsilon)= \pi(1-\varepsilon^2),\qquad r_{\max}(\mathcal{E}_\varepsilon)=1+\varepsilon,
\]
\[
x^2 + y^2 = 1+\varepsilon^2+2\varepsilon\cos 2\theta,\qquad
dS = \left(1-\varepsilon\cos 2\theta + O(\varepsilon^2)\right)d\theta.
\]
A Taylor expansion yields
\begin{align*}
W_p(\mathcal{E}_{\varepsilon}) &= \int_{\partial \mathcal{E}_\varepsilon} (x^2 + y^2)^{\frac{p}{2}}\ dS \\
&= \int_0^{2\pi} \left(1 + p\varepsilon\cos 2\theta + O(\varepsilon^2)\right)          \left(1 - \varepsilon\cos 2\theta + O(\varepsilon^2)\right) d\theta= 2\pi +O(\varepsilon^2).
\end{align*}
Consequently,
\[
\frac{W_p(\mathcal{E}_{\varepsilon})}{2V(\mathcal{E}_{\varepsilon})} = \frac{2\pi+O(\varepsilon^2)}{2\pi(1-\varepsilon^2)} = 1+O(\varepsilon^2),
\]
while
\[
r_{\max}^{p-1} = (1+\varepsilon)^{p-1} = 1+(p-1)\varepsilon + O(\varepsilon^2).
\]
The excess therefore expands as
\[
E_p(\mathcal{E}_\varepsilon) = r_{\max}^{p-1} - \frac{W_p(\mathcal{E}_{\varepsilon})}{2V(\mathcal{E}_{\varepsilon})} = (p-1)\varepsilon + O(\varepsilon^2).
\]
For $p>1$, the linear term dominates, and $E_p(\mathcal{E}_\varepsilon) > 0$ for sufficiently small $\varepsilon>0$.    
\end{example}

For a bounded convex domain $\Omega$, we show that when $E_p(\Omega)>0$, removing a small cap near a point where $|x|=r_{\max}$ strictly decreases $\Lambda_p$. By Proposition~\ref{prop:vector-barycenter} we may assume that $\Omega$ contains the origin. For a small $\varepsilon>0$, let $\nu_{\max}=\frac{x_{\max}}{r_{\max}}$, we define the half-space
\[
H_{\varepsilon} = \{x\in\mathbb{R}^n : \langle x,\nu_{\max}\rangle \leq r_{\max}-\varepsilon\},
\]
and the truncated convex set $\Omega_{\varepsilon} = \Omega \cap H_{\varepsilon}$ and $U_{\varepsilon}=\Omega_{\varepsilon}\cap \partial H_{\varepsilon}$. Set 
\begin{equation}\label{v-P-W-differ}
\Delta V= V(\Omega_\varepsilon)-V(\Omega),\ \  \Delta P=P(\Omega_\varepsilon)-P(\Omega), \ \ 
\Delta W_p  = W_p(\Omega_\varepsilon)-W_p(\Omega).    
\end{equation}
The quantities $\Delta W_p$, $\Delta V$, $\Delta P$ depend on the truncation parameter $\varepsilon$ and vanish as $\varepsilon\to 0^+$.  

\begin{lemma}\label{lem:expansions}
There exists a constant $C(\Omega)>0$ such that for all sufficiently small
$\varepsilon>0$,
\begin{enumerate}
  \item[(i)] $|\Delta V| \leq C(\Omega) |\Delta P|$;
  \item[(ii)] $ \Delta W_p = p r_{\max}^{p-1}\Delta V+ r_{\max}^{p} \Delta P + o(\Delta V)+o(\Delta P)$ as $\varepsilon\to0^+$.
\end{enumerate}
\end{lemma}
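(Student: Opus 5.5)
The plan is to follow the $p=2$ argument of Bucur et al.\ and decompose the boundary change explicitly. Write $C_\varepsilon=\partial\Omega\setminus H_\varepsilon$ for the removed boundary cap and $U_\varepsilon$ for the new flat face. Then $\partial\Omega_\varepsilon=(\partial\Omega\cap H_\varepsilon)\cup U_\varepsilon$, and
\[
\Delta P=\mathcal H^{n-1}(U_\varepsilon)-\mathcal H^{n-1}(C_\varepsilon),\qquad \Delta W_p=\int_{U_\varepsilon}|x|^p\,dS-\int_{C_\varepsilon}|x|^p\,dS .
\]
By convexity, $C_\varepsilon$ is a graph over $U_\varepsilon$ in the direction $\nu_{\max}$, since $U_\varepsilon$ is the orthogonal projection of $C_\varepsilon$ onto $\partial H_\varepsilon$. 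Hence $\mathcal H^{n-1}(C_\varepsilon)\geq \mathcal H^{n-1}(U_\varepsilon)$, so $\Delta P\leq 0$. Both sets lie in the region $r_{\max}-\varepsilon\le\langle x,\nu_{\max}\rangle\le r_{\max}$ and inside $\overline{B(r_{\max})}$. A point $x$ there with $|x|\le r_{\max}$ and $\langle x,\nu_{\max}\rangle\geq r_{\max}-\varepsilon$ satisfies $|x-x_{\max}|^2\le 2r_{\max}\varepsilon$. So the cap shrinks to $x_{\max}$, with diameter $O(\sqrt\varepsilon)$, and $|x|^p=r_{\max}^p+O(\varepsilon)$ uniformly on $C_\varepsilon\cup U_\varepsilon$.

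For (i), note that the removed region $\Omega\setminus H_\varepsilon$ is contained in the cylinder over $U_\varepsilon$ of height $\varepsilon$. Therefore $|\Delta V|\le \varepsilon\,\mathcal H^{n-1}(U_\varepsilon)$. The task is then to compare $\mathcal H^{n-1}(U_\varepsilon)$ with $|\Delta P|=\mathcal H^{n-1}(C_\varepsilon)-\mathcal H^{n-1}(U_\varepsilon)=\int_{U_\varepsilon}(\sqrt{1+|\nabla g|^2}-1)$, where $g$ is the graph function of $C_\varepsilon$ over $U_\varepsilon$.

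Near $x_{\max}$ the ball $B(r_{\max})$ contains $\Omega$. If $\partial\Omega$ has a tangent plane at $x_{\max}$, that plane must be orthogonal to $\nu_{\max}$, so slopes are not automatically large. Instead one uses that $g$ is concave, vanishes on $\partial U_\varepsilon$, and has maximum height comparable to $\varepsilon$. Writing $d$ for the inradius-type width of $U_\varepsilon$, this should give an average slope $|\nabla g|\gtrsim \varepsilon/\operatorname{diam}U_\varepsilon$. Since $\sqrt{1+s^2}-1\ge cs^2$ for bounded $s$, we expect
\[
|\Delta P|\gtrsim \mathcal H^{n-1}(U_\varepsilon)\,\varepsilon^2/\operatorname{diam}(U_\varepsilon)^2 .
\]
Combined with $\operatorname{diam}U_\varepsilon\lesssim\sqrt{\varepsilon}$ from the ball containment, this gives $|\Delta P|\gtrsim \varepsilon\,\mathcal H^{n-1}(U_\varepsilon)\ge|\Delta V|$, which is (i).

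\textbf{Main obstacle.} The comparison in (i) is the hard step. The slope lower bound must be proved carefully, for instance via the coarea formula along rays from the point of $C_\varepsilon$ realizing $x_{\max}$ together with concavity of $g$, so that it is uniform for degenerate caps. If this is delicate, I would instead mimic the $p=2$ lemma in \cite{BucurFeroneNitschTrombetti2021}, which is purely geometric and independent of $p$, and cite it directly.

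For (ii), use the uniform estimate $|x|^p=r_{\max}^p+O(\varepsilon)$ on the cap to write
\[
\Delta W_p=r_{\max}^p\Delta P+O(\varepsilon)\bigl(\mathcal H^{n-1}(U_\varepsilon)+\mathcal H^{n-1}(C_\varepsilon)\bigr).
\]
The error term is not $o(\Delta P)$ as it stands, since $\varepsilon\,\mathcal H^{n-1}(U_\varepsilon)$ is comparable to $\Delta V$. This is precisely why the $pr_{\max}^{p-1}\Delta V$ term appears, so a first-order refinement is needed. Expand
\[
|x|^p=r_{\max}^p+p\,r_{\max}^{p-1}\bigl(\langle x,\nu_{\max}\rangle-r_{\max}\bigr)+O(|x-x_{\max}|^2)\quad\text{near }x_{\max},
\]
noting that $|x|-r_{\max}=\langle x,\nu_{\max}\rangle-r_{\max}+O(|x-x_{\max}|^2)$.

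The quadratic remainder is $O(\varepsilon)$, which is again too large. A cleaner route is to apply the divergence theorem on the removed region $D_\varepsilon=\Omega\setminus H_\varepsilon$. With outward normal $-\nu_{\max}$ on $U_\varepsilon$, this gives
\[
\int_{C_\varepsilon}\langle Y,\nu\rangle-\int_{U_\varepsilon}\langle Y,\nu_{\max}\rangle=\int_{D_\varepsilon}\operatorname{div}Y .
\]
Take $Y=|x|^p\nu_{\max}$, whose divergence is $p|x|^{p-2}\langle x,\nu_{\max}\rangle=p\,r_{\max}^{p-1}+O(\varepsilon)$. Then split $\int_{C_\varepsilon}|x|^p$ into $\int_{C_\varepsilon}|x|^p\langle\nu,\nu_{\max}\rangle$ plus $\int_{C_\varepsilon}|x|^p(1-\langle\nu,\nu_{\max}\rangle)$. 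The second piece equals $(r_{\max}^p+O(\varepsilon))\int_{C_\varepsilon}(1-\langle\nu,\nu_{\max}\rangle)$, and that integral is exactly $-\Delta P$.

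Collecting terms yields
\[
\Delta W_p=p\,r_{\max}^{p-1}\Delta V+r_{\max}^p\Delta P+O(\varepsilon)(|\Delta V|+|\Delta P|),
\]
which is (ii).
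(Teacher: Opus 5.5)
Your proposal is correct, but it argues differently from the paper in both parts.

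\textbf{Part (ii).} The paper works in graph coordinates over $U_\varepsilon'$. It splits $-\Delta W_p=\int(b^{p/2}-a^{p/2})\,dy+\int b^{p/2}(\sqrt{1+|D\phi|^2}-1)\,dy$ and applies the mean value theorem to the first integrand. You instead apply the divergence theorem to $Y=|x|^p\nu_{\max}$ on $D_\varepsilon=\Omega\setminus H_\varepsilon$ and use $\int_{C_\varepsilon}\langle\nu,\nu_{\max}\rangle\,dS=\mathcal H^{n-1}(U_\varepsilon)$. This is coordinate-free and uses only $r_{\max}-\varepsilon\le\langle x,\nu_{\max}\rangle\le|x|\le r_{\max}$ on $\overline{D_\varepsilon}$ and $1-\langle\nu,\nu_{\max}\rangle\ge0$. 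It needs neither the graph parametrization, nor the Lipschitz constant $M(\Omega)$, nor part (i). It also shows directly where the coefficients $p\,r_{\max}^{p-1}$ and $r_{\max}^p$ come from.

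\textbf{Part (i).} The paper bounds $|\Delta V|$ from below by the cone volume $\frac{\varepsilon}{n}\mathcal H^{n-1}(U_\varepsilon')$. It then combines Cauchy--Schwarz with the Faber--Krahn-type Poincar\'e inequality for $\phi\in H^1_0(U_\varepsilon')$. You bound $|\Delta V|$ from above by the cylinder and bound $|\Delta P|$ from below through an averaged slope.

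The step you left as ``should give'' closes in a few lines, but only in averaged form, since pointwise $\nabla g$ can vanish at the top. Let $y_0$ be the projection of $x_{\max}$. Concavity gives $\{g>t\}\supset y_0+(1-t/\varepsilon)(U_\varepsilon-y_0)$. The coarea formula and monotonicity of perimeter under inclusion of convex sets then yield $\int_{U_\varepsilon}|\nabla g|\ge\frac{\varepsilon}{n-1}\mathcal H^{n-2}(\partial U_\varepsilon)\ge\varepsilon\,\mathcal H^{n-1}(U_\varepsilon)/\operatorname{diam}(U_\varepsilon)$. The last inequality is the divergence theorem for the field $y-y_0$ in the hyperplane, which gives $(n-1)\mathcal H^{n-1}(U_\varepsilon)\le\operatorname{diam}(U_\varepsilon)\,\mathcal H^{n-2}(\partial U_\varepsilon)$. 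Cauchy--Schwarz and $\operatorname{diam}(U_\varepsilon)^2\le 8r_{\max}\varepsilon$ then give $\int_{U_\varepsilon}|\nabla g|^2\ge\varepsilon\,\mathcal H^{n-1}(U_\varepsilon)/(8r_{\max})\ge|\Delta V|/(8r_{\max})$. Since (i) does not involve $p$, citing the lemma of Bucur et al.\ is also legitimate.

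\textbf{A caveat on your setup.} You justify the graph property of $C_\varepsilon$ over $U_\varepsilon$ ``by convexity'', and you also rely on a uniform slope bound for $\sqrt{1+s^2}-1\ge cs^2$. Neither follows from convexity alone: a triangle with farthest vertex $x_{\max}$ but $0\notin\Omega$ violates the graph property. Like the paper, you need the normalization $0\in\Omega$. If $B(0,\rho)\subset\Omega$, then $\langle x,\nu\rangle\ge\rho$ on $\partial\Omega$. Hence $\langle\nu,\nu_{\max}\rangle\ge\rho/(2r_{\max})$ on $C_\varepsilon$ once $\sqrt{2r_{\max}\varepsilon}\le\rho/2$, and both properties follow.
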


\begin{proof}
The strategy follows that of Bucur et al. \cite[Lemma 2.5]{BucurFeroneNitschTrombetti2021}. We provide the complete details for the sake of completeness. The argument uses only the convexity of $\Omega$ and $\Omega_{\varepsilon}$, no regularity of its boundary is required.

Since $\Omega$ is a convex body contained in the ball $B(r_{\max})$ and $U_{\varepsilon}\subset B(r_{\max})\cap \partial H_{\varepsilon}$, we have 
\begin{equation}\label{eq:diam-est}
\frac{\operatorname{diam}(U_{\varepsilon})}{2}\leq \sqrt{r^2_{\max}(\Omega)-\left(r_{\max}(\Omega)-\varepsilon\right)^2}\leq \sqrt{2r_{\max}(\Omega)\varepsilon}.    
\end{equation}
We rotate the coordinate system so that the positive $x_n$-axis points $\nu_{\max}$. Let $U_{\varepsilon}' \subset \mathbb R^{n-1}$ be the orthogonal projection onto the subspace $\{x_n=0\}$ of the upper graph of $\Omega_{\varepsilon}$. Equivalently, there exists a function $f: U_{\varepsilon}' \to [r_{\max} -\varepsilon, r_{\max}]$ such that the upper boundary $\partial\Omega\setminus \partial\Omega_{\varepsilon}=\{(y, f(y)) : y \in U_{\varepsilon}'\}$. By construction, $f$ is concave and attains its maximum at the origin, i.e.,
$f(0) = \max_{y\in U_{\varepsilon}'} f(y) =r_{\max}$. Set $\phi(y)=f(y)-(r_{\max}-\varepsilon)$ on $U_{\varepsilon}'$. Since $\phi$ is concave and nonnegative, with maximum $\phi(0)=\varepsilon$, the graph of $\phi$ lies above the cone with vertex $x_{\max}$ and base $U_{\varepsilon}$. Consequently, using the classical cone-volume estimate yields 
\begin{equation}\label{eq:V-Excess}
-\Delta V=|\Delta V|=\int_{U_{\varepsilon}'} \phi(y)\ dy \geq \frac{\varepsilon}{n}\mathcal{H}^{n-1}(U_{\varepsilon}').    
\end{equation}
Moreover, by Sobolev Poincar\'{e} inequality, we have
\begin{equation}\label{ineq: V-excess-square}
\int_{U_{\varepsilon}'}\phi^2\ dy\leq C(n)(\mathcal{H}^{n-1}(U_{\varepsilon}'))^{\frac{2}{n-1}}\int_{U_{\varepsilon}'}|D \phi|^2\ dy,
\end{equation}
here $D$ denotes the gradient w.r.t. the $y$-variables in $\mathbb{R}^{n-1}$. Combining \eqref{eq:diam-est}, \eqref{eq:V-Excess} and \eqref{ineq: V-excess-square}, we obtain
\begin{equation}\label{eq:Delta-V-uppbd}
\begin{aligned}
|\Delta V|
&\leq |\Delta V|^2 \frac{n}{\varepsilon\mathcal{H}^{n-1}(U_{\varepsilon}')}= \left(\int_{U_{\varepsilon}'}\phi\ dy\right)^2\frac{n}{\varepsilon\mathcal{H}^{n-1}(U_{\varepsilon}')}\\
&\leq C(n)\frac{(\mathcal{H}^{n-1}(U_{\varepsilon}'))^\frac{2}{n-1}}{\varepsilon}\int_{U_{\varepsilon}'}|D\phi|^2\ dy\\
&\leq C(n) 2r_{\max}(\Omega)(\omega_{n-1})^\frac{2}{n-1}\int_{U_{\varepsilon}'}|D\phi|^2\ dy 
\end{aligned}    
\end{equation}

Fix $\varepsilon_1 = \varepsilon_1(\Omega)$ such that $\partial \Omega \setminus \partial \Omega_{\varepsilon_1}$ can be represented by the graph of $f$. Set $M(\Omega) = \|Df\|_{L^\infty(U_{\varepsilon_1}')}$. Note that $M(\Omega)$ is finite since $f$ is Lipschitz on $U_{\varepsilon_1}'$. Then for any $\varepsilon < \varepsilon_1$, we have
\begin{equation}\label{eq:Delta P}
-\Delta P=|\Delta P| = \int_{U_{\varepsilon}'} \left(\sqrt{1+|Df|^2} - 1 \right) dy \geq K(\Omega)\int_{U_{\varepsilon}'} |Df|^2\ dy,    
\end{equation}
where we used 
\[
\sqrt{1+|Df|^2} - 1\geq \frac{|Df|^2}{2\sqrt{1+|Df|^2}}\geq \frac{|Df|^2}{2+2M(\Omega)}.
\]
Collecting \eqref{eq:Delta-V-uppbd} and \eqref{eq:Delta P} gives the first conclusion (i).  
 
We turn to the estimate for $\Delta W_p$. Set $R:=r_{\max}$ and define
\[
a(y)=(R-\varepsilon)^2+|y|^2,\qquad b(y)=f^2(y)+|y|^2=\left[R-\varepsilon+\phi(y)\right]^2+|y|^2.
\]
Then
\begin{equation}\label{eq:int-W_p}
\begin{aligned}
-\Delta W_p
&=\int_{U_{\varepsilon}'}\left(f^2(y)+|y|^2\right)^\frac{p}{2}\sqrt{1+|Df|^2}\ dy-\int_{U_{\varepsilon}'}a^\frac{p}{2}(y)\ dy\\
&=\int_{U_{\varepsilon}'}(b^{p/2}(y)-a^{p/2}(y))\ dy+\int_{U_{\varepsilon}'} b^{p/2}(y)\left(\sqrt{1+|D\phi|^2}-1\right) dy\\
\end{aligned}
\end{equation}
Note that $b(y)-a(y)= \phi(y)(2R - 2\varepsilon + \phi(y))$. By the mean value theorem, we have $b^{p/2}(y)-a^{p/2}(y)=\frac{p}{2} \xi^{p/2-1}(y)(b(y)-a(y))$ for some $\xi(y)$ between $a(y)$ and $b(y)$. As $|y|^2\leq \frac{1}{4}(\operatorname{diam}(U_{\varepsilon}))^2=O(\varepsilon)$ and $0\leq \phi\leq\varepsilon$, both $a(y),b(y)=R^2+O(\varepsilon)$, whence $\xi(y)=R^2+O(\varepsilon)$ uniformly on $U_{\varepsilon}'$. Then
\begin{align*}
b^{p/2(y)}-a^{p/2}(y) 
&=\frac{p}{2}(R^2+O(\varepsilon))^{p/2-1}\phi(y)(2R+O(\varepsilon))\\
&=\frac{p}{2}(R^{p-2}+O(\varepsilon))\phi(y)(2R+O(\varepsilon)) =pR^{p-1}\phi(y)+\phi(y)O(\varepsilon).
\end{align*}
The first integral in \eqref{eq:int-W_p} equals
\begin{equation}\label{eq:integral-term1}
\begin{split}
\int_{U_{\varepsilon}'}(b^{p/2}(y)-a^{p/2}(y))\ dy &= pR^{p-1}\int_{U_{\varepsilon}'}\phi(y)\ dy + o(|\Delta V|) \\
&=-pR^{p-1}\Delta V-o(\Delta V),   
\end{split}
\end{equation}
where $-\Delta V=\int_{U_{\varepsilon}'}\phi(y)\ dy$. Similarly, due to the facts that $b^{p/2}(y)=R^p+O(\varepsilon)$ and $-\Delta P=\int_{U_{\varepsilon}'}(\sqrt{1+|D \phi|^2}-1)\ dy$, the second integral in \eqref{eq:int-W_p} becomes
\[
\left(R^p+O(\varepsilon)\right)\int_{U_{\varepsilon}'}\left(\sqrt{1+|\nabla h|^2}-1\right) dy=-R^p\Delta P-o(\Delta P).
\]
Therefore, 
\[
-\Delta W_p= -pR^{p-1}\Delta V - R^p\Delta P - o(\Delta V)+o(\Delta P).
\]
Combining this with the earlier lower bounds for $\Delta V$ and $\Delta P$ gives the second conclusion.
\end{proof}

\begin{proposition}\label{prop:positive}
Let $\Omega$ be a bounded convex domain of $\mathbb{R}^n$ such that
\[
E_p(\Omega)>0,
\]
then $\Omega$ is not a minimizer of $\Lambda_p(\cdot)$.
\end{proposition}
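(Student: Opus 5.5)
We argue by contradiction, following the structure of the proof of Proposition~2.? in Bucur et al.\ for $p=2$. Suppose $\Omega$ is a minimizer with $E_p(\Omega)>0$. By Proposition~\ref{prop:vector-barycenter} we may translate so that $0\in\Omega$; translation does not increase $W_p$, so the translate is still a minimizer and has $r_{\max}$ no larger. To avoid having to control how $E_p$ changes, we first minimize $W_p$ over translations, which is exactly the normalization of Proposition~\ref{prop:vector-barycenter}. We then argue with that translate and assume its excess is still positive (this is the convention of Subsection~\ref{sec:pos}, where $E_p$ is computed in the centered position). We then compare $\Lambda_p(\Omega_\varepsilon)$ with $\Lambda_p(\Omega)$ for the truncations $\Omega_\varepsilon=\Omega\cap H_\varepsilon$. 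These are still bounded convex domains, so minimality gives $\Lambda_p(\Omega_\varepsilon)\ge\Lambda_p(\Omega)$ for all small $\varepsilon$.

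The key step is a first-order expansion of $\log\Lambda_p$. Write
\[
\log\Lambda_p(\Omega_\varepsilon)-\log\Lambda_p(\Omega)=\frac{\Delta W_p}{W_p}-\frac{\Delta P}{P}-\frac{p}{n}\frac{\Delta V}{V}+o(\Delta V)+o(\Delta P),
\]
using that $\Delta V,\Delta P,\Delta W_p\to0$. Lemma~\ref{lem:expansions}(ii) shows $\Delta W_p=O(|\Delta V|+|\Delta P|)$, so the quadratic remainders are of this order. Substituting (ii), the right-hand side becomes
\[
\frac{p\,\Delta V}{W_p}\Bigl(r_{\max}^{p-1}-\frac{W_p}{nV}\Bigr)+\frac{\Delta P}{W_p}\Bigl(r_{\max}^p-\frac{W_p}{P}\Bigr)+o(\Delta V)+o(\Delta P).
\]
The first coefficient is $p\,E_p(\Omega)/W_p>0$. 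Since $\Delta V<0$, the first term is strictly negative. For the second, $W_p/P$ is the boundary average of $|x|^p$, which is at most $r_{\max}^p$; since $\Delta P<0$, this term is $\le0$. Hence the whole expression is at most $p E_p(\Omega)\Delta V/W_p+o(\Delta V)+o(\Delta P)$.

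The main obstacle is the remainder $o(\Delta P)$: it must be absorbed by a term that is actually negative. Lemma~\ref{lem:expansions}(i) only gives $|\Delta V|\le C|\Delta P|$, which goes the wrong way, so $o(\Delta P)$ cannot be dominated by $E_p\,\Delta V$. We therefore split cases using the second term.

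\textbf{Case 1: $r_{\max}^p-W_p/P>0$.} This holds whenever $|x|$ is not constant on $\partial\Omega$, which is automatic unless $\Omega$ is a centered ball, and a centered ball has $E_p=0$. Then the second term equals $-c|\Delta P|$ with $c>0$, and it absorbs the $o(\Delta P)$ remainder. Using (i), the $o(\Delta V)$ remainder is also $o(\Delta P)$. Consequently
\[
\log\Lambda_p(\Omega_\varepsilon)-\log\Lambda_p(\Omega)\le -\tfrac{c}{2}|\Delta P|<0
\]
for small $\varepsilon$.

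This contradicts minimality, so $\Omega$ is not a minimizer. Case 2, where $|x|\equiv r_{\max}$ on $\partial\Omega$, cannot occur when $E_p>0$ and thus needs no argument. The one point to check carefully is that $\Delta P\neq0$ for small $\varepsilon>0$: cutting off a cap of positive volume strictly decreases the perimeter of a convex set.
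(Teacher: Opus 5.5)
Your proof is correct. It follows the paper's skeleton: center the domain, cut a cap at $x_{\max}$, and expand with Lemma~\ref{lem:expansions}. The absorption step, however, differs from the paper's, and yours is the version that actually closes.

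\textbf{The paper's step.} The paper writes $\Delta\Lambda_p/\Lambda_p=A\Delta V+B\Delta P+o(|\Delta V|+|\Delta P|)$, drops $B\Delta P\le 0$, and asserts that the remainder is below $\tfrac12|A\Delta V|$. That needs $|\Delta P|=O(|\Delta V|)$, the converse of Lemma~\ref{lem:expansions}(i). It fails when $x_{\max}$ is a vertex: there $|\Delta V|\sim\varepsilon^n$ and $|\Delta P|\sim\varepsilon^{n-1}$. The remainder in (ii) is then genuinely of size $\varepsilon|\Delta P|$, with the unfavourable sign. For a planar vertex one computes $\Delta W_p=p r_{\max}^{p-1}\Delta V+r_{\max}^p\Delta P+\tfrac p2 r_{\max}^{p-1}\varepsilon|\Delta P|+O(\varepsilon^3)$. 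This error is comparable to $|A\Delta V|$ and exceeds it when $E_p>0$ is small.

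\textbf{Your step.} You keep $B\Delta P$ and note that $B>0$ strictly, because $B=0$ forces a centered ball and hence $E_p=0$. You then use (i) in the direction it actually goes, turning every remainder into $o(|\Delta P|)$; the sign of $A\Delta V$ is needed only to discard that term. This is exactly the mechanism of Proposition~\ref{prop:zero-excess}, so your argument handles every non-ball with $0\in\Omega$ and $E_p\ge 0$ at once. The paper's route would not need strictness of $B$, but as written it does not justify its final inequality. Yours costs only the strictness of $B$, which is free here.

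\textbf{Two small repairs to your set-up.}
\begin{enumerate}
\item The claim that the translate ``has $r_{\max}$ no larger'' is unjustified, since the $p$-barycenter is not the circumcenter. You never use it, so drop it.
\item You need not assume the excess stays positive after translating, because a minimizer is already centered. Indeed, $F(y)=\int_{\partial\Omega}|x-y|^p\,dS$ is strictly convex. If its minimizer $x_0$ were nonzero, then $W_p(\Omega-x_0)<W_p(\Omega)$, i.e.\ $\Lambda_p(\Omega-x_0)<\Lambda_p(\Omega)$. So no translation occurs, and $0\in\Omega$ holds automatically. This is what guarantees the cap is a Lipschitz graph over the cutting hyperplane, as Lemma~\ref{lem:expansions} requires.
\end{enumerate}
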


\begin{proof}
We argue by contradiction. Assume that $\Omega$ is a minimizer with $E_p(\Omega)>0$. By Proposition~\ref{prop:vector-barycenter} we may assume that $0\in\Omega$.

Recall the notation introduced before Lemma~\ref{lem:expansions}. For brevity we write
\[
V = V(\Omega),\quad P = P(\Omega),\quad W_p = W_p(\Omega), \quad\Lambda_p=\Lambda_p(\Omega).
\]
From the definition of $\Lambda_p$ we have
\[
\Lambda_p(\Omega_\varepsilon) = \frac{W_p + \Delta W_p}{(V + \Delta V)^{p/n} (P + \Delta P)},
\qquad
\Lambda_p= \frac{W_p}{V^{p/n} P}.
\]
A first-order expansion in the small ratios $\Delta V/V$, $\Delta P/P$, and $\Delta W_p/W_p$, with higher-order terms collected in the remainder, gives
\[
\begin{aligned}
\frac{\Delta\Lambda_p}{\Lambda_p}
&=\frac{\Lambda_p(\Omega_\eps)-\Lambda_p}{\Lambda_p}
= \frac{W_p+\Delta W_p}{W_p}\left(\frac{V}{V+\Delta V}\right)^{p/n}\frac{P}{P+\Delta P} - 1\\
&= \left(1+\frac{\Delta W_p}{W_p}\right)
   \left(1-\frac{p}{n}\frac{\Delta V}{V}+O(|\Delta V|^2)\right)
   \left(1-\frac{\Delta P}{P}+O(|\Delta P|^2)\right) - 1\\
&= \frac{\Delta W_p}{W_p} - \frac{p}{n}\frac{\Delta V}{V} - \frac{\Delta P}{P}
   + O\left(|\Delta V|^2+|\Delta P|^2+|\Delta W_p|^2\right).
\end{aligned}
\]
By Lemma~\ref{lem:expansions},
\[
\frac{\Delta\Lambda_p}{\Lambda_p}= \Delta V\left(\frac{p r_{\max}^{p-1}}{W_p} - \frac{p}{nV}\right)
+ \Delta P\left(\frac{r_{\max}^{p}}{W_p} - \frac{1}{P}\right) + o(|\Delta V|+|\Delta P|).
\]
The assumption $E_p(\Omega)>0$ gives
\[
A := \frac{p r_{\max}^{p-1}}{W_p} - \frac{p}{nV} = \frac{p}{W_p}E_p(\Omega) > 0.
\]
Since $|x|\leq r_{\max}$ on $\partial\Omega$, we have $W_p \leq r_{\max}^p P$. Hence
\[
B := \frac{r_{\max}^{p}}{W_p} - \frac{1}{P} \geq 0.
\]
Since $\Delta P<0$ and $\Delta V<0$,
\[
\frac{\Delta\Lambda_p}{\Lambda_p} \leq A\Delta V + o(|\Delta V|+|\Delta P|).
\]
For small $\varepsilon$ the $o(\cdot)$ term is smaller than $\frac12|A\Delta V|$, so the right‑hand side is negative. Hence $\Lambda_p(\Omega_\varepsilon)<\Lambda_p(\Omega)$, which contradicts the minimality of $\Omega$. Thus $E_p(\Omega)>0$ is impossible, proving the proposition.
\end{proof}

\subsection{A minimizer with zero excess is a centered ball}
\begin{proposition}\label{prop:zero-excess}
Let $\Omega\subset\R^n$ be a bounded convex domain. Assume that $\Omega$ is a minimizer of $\Lambda_p$ with $E_p(\Omega)=0$. Then $\Omega$ is a ball centered at the origin.
\end{proposition}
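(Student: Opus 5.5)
We follow the strategy of Bucur et al.\ for $p=2$: combine the zero-excess identity with a cone-volume comparison and the minimality of $\Omega$ to force $|x|\equiv r_{\max}$ on $\partial\Omega$. Throughout, $\Omega$ is normalized as in Proposition~\ref{prop:vector-barycenter}, so that $0\in\Omega$.

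\textbf{Step 1: Cone-volume comparison.} Since $\Omega$ is convex and contains the origin, the outer normal $\nu$ exists $\mathcal{H}^{n-1}$-a.e.\ on $\partial\Omega$, and $\langle x,\nu\rangle\geq 0$ there. The divergence theorem gives
\[
nV(\Omega)=\int_{\partial\Omega}\langle x,\nu\rangle\, dS\leq \int_{\partial\Omega}|x|\, dS .
\]
The condition $E_p(\Omega)=0$ reads
\[
W_p(\Omega)=n\,r_{\max}^{p-1}V(\Omega)=\int_{\partial\Omega} r_{\max}^{p-1}\langle x,\nu\rangle\, dS .
\]
Since $|x|^{p-1}\leq r_{\max}^{p-1}$ and $\langle x,\nu\rangle\leq|x|$, this becomes
\[
\int_{\partial\Omega}|x|^{p}\, dS=\int_{\partial\Omega} r_{\max}^{p-1}\langle x,\nu\rangle\, dS\geq \int_{\partial\Omega}|x|^{p-2}\langle x,\nu\rangle\,|x|\, dS .
\]
This identity alone does not force equality. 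I would therefore not try to close the argument here, but use minimality in Step 2.

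\textbf{Step 2: First variation under dilation-type perturbations.} Minimality makes the shape derivative \eqref{eq:Lambda-prime} vanish in every admissible direction. Because $\Omega$ may be nonsmooth, I would instead use the truncation of Section~\ref{sec:pos}. With $E_p(\Omega)=0$, Lemma~\ref{lem:expansions} and the computation in the proof of Proposition~\ref{prop:positive} give
\[
\frac{\Delta\Lambda_p}{\Lambda_p}=B\,\Delta P+o(|\Delta V|+|\Delta P|),\qquad B=\frac{r_{\max}^p}{W_p}-\frac1P .
\]
Here $\Delta P<0$, and $|\Delta V|\leq C|\Delta P|$ by Lemma~\ref{lem:expansions}(i). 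If $B>0$, cutting a small cap near $x_{\max}$ strictly decreases $\Lambda_p$, contradicting minimality. Hence $B=0$, that is,
\[
W_p(\Omega)=r_{\max}^pP(\Omega).
\]
Since $|x|\leq r_{\max}$ on $\partial\Omega$ and $|x|$ is continuous there, this forces $|x|=r_{\max}$ on all of $\partial\Omega$.

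\textbf{Step 3: Conclusion.} We now have $\partial\Omega\subset\partial B(r_{\max})$. A convex body whose boundary lies in the sphere $\partial B(r_{\max})$ must equal $B(r_{\max})$: every boundary point is then extreme for the ball, and the convex hull of $\partial\Omega$, which is $\overline\Omega$, would otherwise miss part of the sphere, while its boundary stays on the sphere, which is impossible. So $\Omega$ is a ball centered at the origin, as required. One can also check consistency: $E_p(B(r))=0$, and $B(r)$ satisfies the vector-barycenter condition.

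\textbf{Main obstacle.} The delicate step is Step 2. I expect the hard part to be justifying that the $o(\cdot)$ remainder is genuinely negligible compared with $B\,\Delta P$. This requires $|\Delta V|\lesssim|\Delta P|$, which is Lemma~\ref{lem:expansions}(i), and we have this. In the degenerate case where $\Delta P$ could be of higher order than $\varepsilon$, the lemma still compares everything against $\Delta P$ itself, so the argument should go through.
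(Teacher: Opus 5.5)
Your proposal is correct and is essentially the paper's proof. Step~2 is the same cap-truncation argument near $x_{\max}$: $E_p(\Omega)=0$ kills the $\Delta V$ coefficient, and $r_{\max}^p/W_p-1/P>0$ unless $|x|\equiv r_{\max}$ on $\partial\Omega$; you run it as a contrapositive rather than by contradiction, and your explicit appeal to Lemma~\ref{lem:expansions}(i) to make the remainder $o(|\Delta P|)$ is exactly the step the paper leaves implicit. Step~1 is an unused detour and can be dropped, and Step~3 is cleaner by connectedness: $\Omega\subset B(r_{\max})$ is open and, since $\partial\Omega\cap B(r_{\max})=\emptyset$, also relatively closed in the connected ball, so $\Omega=B(r_{\max})$.
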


\begin{proof}
By Proposition~\ref{prop:vector-barycenter} we may translate $\Omega$ so that $0\in\Omega$ and $\Lambda_p$ does not increase; hence we assume $0\in\Omega$ without loss of generality.  Suppose that $\Omega$ is not a centered ball. Let $\Omega_\varepsilon$ be the truncation defined before Lemma~\ref{lem:expansions} and keep the notation $\Delta V,\Delta P,\Delta W_p$ from there. By construction, $\Delta V<0$ and $\Delta P<0$.

From Lemma~\ref{lem:expansions} and the computation of the relative change carried out in Proposition~\ref{prop:positive}, we have
\begin{equation}\label{E_p=0-differ}
\frac{\Delta\Lambda_p}{\Lambda_p}
= \Bigl(\frac{r_{\max}^p}{W_p} - \frac{1}{P}\Bigr)\Delta P
  + o(|\Delta V|+|\Delta P|),
\end{equation}
where we already used $E_p(\Omega)=0$ to cancel the $\Delta V$ term.

Since $|x|\leq r_{\max}$ on $\partial\Omega$, we have $W_p\leq r_{\max}^p P$. Equality would imply $|x|=r_{\max}$ $\mathcal{H}^{n-1}$-a.e., which together with convexity and $0\in\Omega$ forces $\Omega = B(r_{\max})$, which contradicts our assumption. Hence $W_p < r_{\max}^p P$, so the coefficient
\[
C := \frac{r_{\max}^p}{W_p} - \frac{1}{P} > 0.
\]

Since $\Delta P<0$, the principal term $C\Delta P$ is strictly negative.  Choosing $\varepsilon$ small enough makes the remainder $o(|\Delta V|+|\Delta P|)$ negligible compared to $|\Delta V|+|\Delta P|$, so the right‑hand side of \eqref{E_p=0-differ} becomes strictly negative. Thus $\Lambda_p(\Omega_\varepsilon) < \Lambda_p(\Omega)$, contradicting the minimality of $\Omega$. Therefore, $\Omega$ must be a centered ball.
\end{proof}

\begin{proof}[Proof of Theorem~\ref{thm:Lambda-lower}]
From Subsection~\ref{sec:exist}, it follows that $\Lambda_p(\cdot)$ admits a minimizer among all bounded convex domains that contain the origin. According to Propositions~\ref{prop:negative-excess}, \ref{prop:positive} and \ref{prop:zero-excess}, a minimizer cannot have negative, positive or zero excess unless it is a ball centered at the origin. Hence any minimizer must be such a ball. Evaluating $\Lambda_p$ on $B(r)$ gives $\Lambda_p(B(r))=\omega_n^{-p/n}$ independently of $r$, which completes the proof.
\end{proof}

\section{A preliminary \texorpdfstring{$p$}{p}-barycenter}
\label{sec:preliminary}

For completeness, in this section we record an elementary fact that guarantees that, after a suitable translation, the coordinate functions can serve as test functions for the $p$-Steklov eigenvalue problem.

\begin{proposition}\label{prop:prelim-barycenter}
Let $\Omega\subset\mathbb{R}^n$ be a bounded convex domain and let $p>1$. For each $i=1,\dots,n$, there exists a unique $x_{i,0}\in\mathbb{R}$ such that
\[
\int_{\partial\Omega} |x_i - x_{i,0}|^{p-2}(x_i - x_{i,0})\ dS = 0.
\]
Set $x_0=(x_{1,0},\dots,x_{n,0})$. Then the translated domain $\Omega_0=\Omega-x_0$ satisfies
\[
\int_{\partial\Omega_0} |y_i|^{p-2}y_i\ dS = 0 \qquad\forall\ i=1,\dots,n.
\]
Consequently, the coordinate functions $y_i$ are admissible test functions in the variational characterization of the first nontrivial $p$-Steklov eigenvalue on $\Omega_0$.
\end{proposition}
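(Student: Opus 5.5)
The argument is one-dimensional, coordinate by coordinate, mirroring Proposition~\ref{prop:vector-barycenter} but with $|x_i-t|^p$ in place of $|x-y|^p$. Fix $i\in\{1,\dots,n\}$ and define
\[
g_i(t)=\int_{\partial\Omega}|x_i-t|^p\,dS,\qquad t\in\mathbb{R}.
\]
Since $\partial\Omega$ is compact and $P(\Omega)<\infty$, $g_i$ is finite and continuous. Because $p>1$, the map $t\mapsto|s-t|^p$ is $C^1$ with derivative $-p|s-t|^{p-2}(s-t)$, and this derivative is locally bounded. Differentiation under the integral sign (dominated convergence) therefore gives
\[
g_i'(t)=-p\int_{\partial\Omega}|x_i-t|^{p-2}(x_i-t)\,dS .
\]

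For existence of the zero, I would argue directly on $h_i(t)=\int_{\partial\Omega}|x_i-t|^{p-2}(x_i-t)\,dS$, which is continuous in $t$. For $t<\min_{\partial\Omega}x_i$ the integrand is strictly positive, so $h_i(t)>0$; for $t>\max_{\partial\Omega}x_i$ it is strictly negative, so $h_i(t)<0$. The intermediate value theorem then yields a zero $x_{i,0}$.

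Uniqueness is the one point needing care, and I would prove it via strict monotonicity. The function $s\mapsto|s|^{p-2}s$ is strictly increasing on $\mathbb{R}$, so for $t<t'$ we have $|x_i-t|^{p-2}(x_i-t)>|x_i-t'|^{p-2}(x_i-t')$ for every $x\in\partial\Omega$. Integrating over $\partial\Omega$, which has positive $\mathcal{H}^{n-1}$ measure, gives $h_i(t)>h_i(t')$. Hence $h_i$ is strictly decreasing and its zero is unique. Equivalently, $g_i$ is strictly convex and coercive, and $x_{i,0}$ is its unique minimizer.

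Finally, set $x_0=(x_{1,0},\dots,x_{n,0})$ and $\Omega_0=\Omega-x_0$. Under the change of variables $y=x-x_0$, which is an isometry and so preserves $dS$, we have $y_i=x_i-x_{i,0}$, and therefore
\[
\int_{\partial\Omega_0}|y_i|^{p-2}y_i\,dS=h_i(x_{i,0})=0\qquad\text{for each } i.
\]
Each $y_i$ is Lipschitz on the bounded set $\Omega_0$, so $y_i\in W^{1,p}(\Omega_0)$. Its trace is not identically zero, since otherwise $\partial\Omega_0$ would lie in a hyperplane. The constraint in \eqref{eq:rayleigh} is exactly the identity above, so each $y_i$ is an admissible test function for \eqref{eq:rayleigh} on $\Omega_0$.
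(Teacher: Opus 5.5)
Your proposal is correct and follows essentially the same route as the paper: continuity and strict decrease of $h_i(t)=\int_{\partial\Omega}|x_i-t|^{p-2}(x_i-t)\,dS$, a sign change with the intermediate value theorem, and then translation invariance of $dS$. Your sign check for $t<\min_{\partial\Omega}x_i$ and $t>\max_{\partial\Omega}x_i$ is a slightly cleaner substitute for the paper's limits $t\to\pm\infty$, and your extra remarks (the convex potential, $y_i\in W^{1,p}(\Omega_0)$, and the nonzero trace) are harmless additions.
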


\begin{proof}
Fix $i$ and define
\[
g_i(t)=\int_{\partial\Omega} |x_i-t|^{p-2}(x_i-t)\ dS,\qquad t\in\mathbb{R}.
\]
Note that the function $\varphi(s)=|s|^{p-2}s$ is continuous, strictly increasing, odd, and $\lim_{s\to\pm\infty}\varphi(s)=\pm\infty$. Since $x_i$ is bounded on the compact set $\partial\Omega$, $g_i$ is continuous. For $t$ sufficiently large, the inequality $x_i-t\leq-1$ holds, which implies that $\varphi(x_i-t)\leq-1$. Thus $g_i(t)\leq-\mathcal{H}^{n-1}(\partial\Omega)$, and letting $t\to+\infty$ gives $g_i(t) \to -\infty$. The same argument with $t \to -\infty$ yields $\lim_{t\to-\infty}g_i(t)=+\infty$. Since $\varphi$ is strictly increasing and $x_i-t$ is strictly decreasing in $t$, $g_i$ is strictly decreasing. The intermediate value theorem yields a unique $x_{i,0}\in\mathbb{R}$ with $g_i(x_{i,0})=0$. Set $x_0=(x_{1,0},\dots,x_{n,0})$.

Now let $\Omega_0=\Omega-x_0$ and write $y=x-x_0$. By the translation invariance of the boundary measure, we have
\[
\int_{\partial\Omega_0} |y_i|^{p-2}y_i\ dS
   = \int_{\partial\Omega} |x_i-x_{i,0}|^{p-2}(x_i-x_{i,0})\ dS = 0,
\]
which completes the proof.
\end{proof}

We present an example demonstrating that integral conditions $\int_{\partial \Omega} |x|^{p-2}x\  dS = 0$ and for each $i = 1, \dots, n$,  $\int_{\partial \Omega} |x_i|^{p-2}x_i\  dS = 0$ are not equivalent.
\begin{example}\label{ex:non-equivalence}
Let $n=2$, $p=3$. Let $r>0$. Define
\[
\Omega=\Omega_1\cup\Omega_2,\quad
\Omega_1=\{x_1^2+x_2^2<1,\;x_1\geq 0\},\quad
\Omega_2=(-r,0)\times(-1,1).
\]
The boundary $\partial\Omega$ consists of the right semicircle $\Gamma_1$, the horizontal segments $L_1=[-r,0]\times\{1\}$, $L_3=[-r,0]\times\{-1\}$,
and the vertical segment $L_2=\{-r\}\times[-1,1]$; see Figure~\ref{fig:example}.
\begin{figure}
 \begin{center}
  \includegraphics[width=0.6\textwidth]{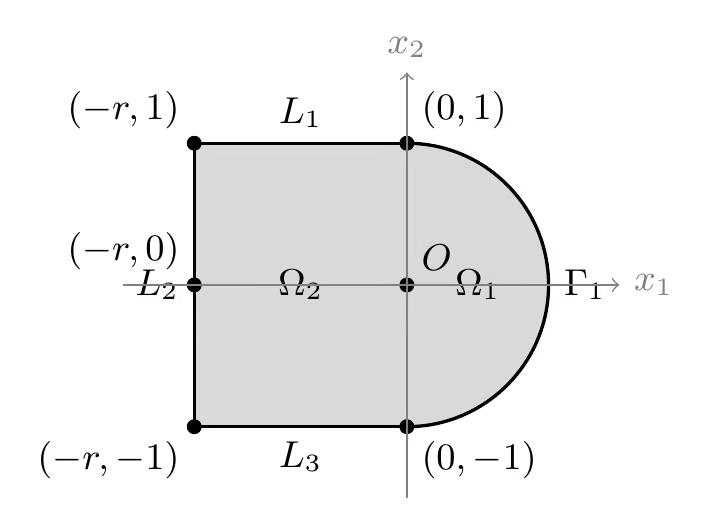}
  \caption{$\Omega$ is the union of a semicircle and a rectangle.}
  \label{fig:example}
 \end{center}
\end{figure}

On $\Gamma_1$ parametrized by $(x_1,x_2)=(\cos\theta, \sin\theta)$ with $\theta\in(-\pi/2,\pi/2)$, we have
\begin{align*}
\int_{\Gamma_1} |x|x\ dS &= \int_{-\pi/2}^{\pi/2}(\cos\theta,\sin\theta)\ d\theta = (2,0),\\
\int_{\Gamma_1} |x_1|x_1\ dS &= \int_{-\pi/2}^{\pi/2}\cos^2\theta\ d\theta = \frac{\pi}{2},\\
\int_{\Gamma_1} |x_2|x_2\ dS &= \int_{-\pi/2}^{\pi/2}|\sin\theta|\sin\theta\ d\theta = 0.
\end{align*}  
On $L_1=[-r,0]\times\{1\}$, parametrized by $x_1\in[-r,0]$ with $dS=dx_1$, we have
\begin{align*}
\int_{L_1}|x|x\ dS &=
\Bigl(\int_{-r}^0 x_1\sqrt{x_1^2+1}\ dx_1,\;
\int_{-r}^0\sqrt{x_1^2+1}\ dx_1\Bigr)\\[2pt]
&= \Bigl(\frac13-\frac13(r^2+1)^{3/2},\;
\frac{r}{2}\sqrt{r^2+1}-\frac12\log\bigl(\sqrt{r^2+1}-r\bigr)\Bigr),\\[4pt]
\int_{L_1}|x_1|x_1\ dS &= \int_{-r}^0 (-x_1)x_1\ dx_1 = -\frac{r^3}{3},\\[4pt]
\int_{L_1}|x_2|x_2\ dS &= \int_{-r}^0 1^2\ dx_1 = r.
\end{align*}
On $L_2=\{-r\}\times[-1,1]$, parametrized by $x_2\in[-1,1]$ with $dS=dx_2$, we have
\begin{align*}
\int_{L_2}|x|x\ dS &=
\Bigl(\int_{-1}^1 (-r)\sqrt{r^2+x_2^2}\ dx_2,\;
\int_{-1}^1x_2\sqrt{r^2+x_2^2}\ dx_2\Bigr)\\
&=\Bigl(-2r\int_0^1\sqrt{r^2+t^2}\ dt,\;0\Bigr)\\
&= \Bigl(-r\sqrt{r^2+1}-r^3\log\frac{1+\sqrt{r^2+1}}{r},\;0\Bigr),\\[4pt]
\int_{L_2}|x_1|x_1\ dS &= \int_{-1}^1 r(-r)\ dx_2 = -2r^2,\\[4pt]
\int_{L_2}|x_2|x_2\ dS &= \int_{-1}^1 |x_2|x_2\ dx_2 = 0.
\end{align*}
On $L_3=[-r,0]\times\{-1\}$, parametrized by $x_1\in[-r,0]$ with $dS=dx_1$, we have
\begin{align*}
\int_{L_3}|x|x\ dS &=
\Bigl(\int_{-r}^0 x_1\sqrt{x_1^2+1}\ dx_1,\;
\int_{-r}^0 (-1)\sqrt{x_1^2+1}\ dx_1\Bigr)\\
&= \Bigl(\frac13-\frac13(r^2+1)^{3/2},\;
-\frac{r}{2}\sqrt{r^2+1}+\frac12\log\bigl(\sqrt{r^2+1}-r\bigr)\Bigr),\\[4pt]
\int_{L_3}|x_1|x_1\ dS &= \int_{-r}^0 (-x_1)x_1\ dx_1 = -\frac{r^3}{3},\\[4pt]
\int_{L_3}|x_2|x_2\ dS &= \int_{-r}^0  (-1)\ dx_1 = -r.
\end{align*}
Hence,
\begin{align*}
\int_{\partial\Omega} |x|\ x\ dS &= \left(\frac{8}{3} - \frac{2}{3}(r^2+1)^{3/2} - r\sqrt{r^2+1} - r^3\log\frac{1+\sqrt{r^2+1}}{r},\ 0 \right),\\
\int_{\partial\Omega} |x_1|\ x_1\ dS &= \frac{\pi}{2} - 2r^2 - \frac{2}{3}r^3,\qquad 
\int_{\partial\Omega} |x_2|\ x_2\ dS = 0.
\end{align*}

Define $f(r)=\frac{2}{3}r^3+2r^2$ for $r>0$, with $f(0)=0$. Then $f'(r)=2r^2+4r>0$, so $f$ is strictly increasing on $[0,\infty)$. Since $f(3/4)=45/32<\pi/2$ and $f(r)\to\infty$ as $r\to\infty$, there exists a unique $r_1>3/4$ satisfying $f(r_1) = \pi/2$. For this value of $r_1$, we obtain
\[
\int_{\partial\Omega}|x_1|x_1\ dS = \frac{\pi}{2}-f(r_1)=0.
\]
Now consider the first component of the vector integral,
\[
g(r) = \frac{8}{3} - \frac{2}{3}(r^2+1)^{3/2} - r\sqrt{r^2+1} - r^3\log\frac{1+\sqrt{r^2+1}}{r}, \qquad r>0.
\]
Differentiating gives $g'(r)<0$ for all $r>0$, hence $g$ is strictly decreasing. Evaluating at $r=3/4$ gives
\[
g\left(\frac{3}{4}\right) = \frac{41}{96} - \frac{27}{64}\log 3 < 0.
\]
Since $r_1>3/4$ and $g$ is strictly decreasing, we obtain $g(r_1)<g(\tfrac{3}{4})<0$. Thus $\int_{\partial\Omega}|x|x\ dS \neq (0,0)$, whereas $\int_{\partial\Omega}|x_i|x_i\ dS = 0$ for $i=1,2$, showing that the two integral conditions are not equivalent.
\end{example}

\section{Proof of Theorem~\ref{thm:main-bound}}
\label{sec:proof-main}

We now give the proof of the upper bound for $\sigma_{1,p}$ with $p \in (1, n]$, which was stated as Theorem~\ref{thm:main-bound}.

\begin{proof}[Proof of Theorem~\ref{thm:main-bound}]

By Proposition~\ref{prop:prelim-barycenter}, there exists a point $x_0$ such that, after translating by $x_0$, the domain $\Omega_0 = \Omega - x_0$ satisfies
\[
\int_{\partial\Omega_0} |y_i|^{p-2} y_i\ dS = 0, \qquad \text{for every } i=1,\dots,n.
\]
In particular, the coordinate functions $y_i$ are admissible test functions in the variational characterization \eqref{eq:rayleigh} on $\Omega_0$. By the translation invariance of the $p$-Steklov spectrum, we have
\begin{equation}\label{eq:eigenvalue-trans-invar} 
\sigma_{1,p}(\Omega) = \sigma_{1,p}(\Omega_0).    
\end{equation}

For each $i$, using $u(y)=y_i$ in the Rayleigh quotient \eqref{eq:rayleigh} gives
\[
\sigma_{1,p}(\Omega_0) \int_{\partial\Omega_0} |y_i|^p\ dS \leq \int_{\Omega_0} |\nabla y_i|^p\ dy = V(\Omega_0).
\]
Summing over $i=1,\dots,n$ yields
\begin{equation}\label{eq:key-inequ-thm-proof}
\sigma_{1,p}(\Omega_0) \sum_{i=1}^n \int_{\partial\Omega_0} |y_i|^p\ dS\leq n V(\Omega_0).    
\end{equation}
 
When $p\in(1,2]$, the elementary inequality $|y|^p \leq \sum_{i=1}^n |y_i|^p$ holds pointwise, so that
\begin{equation}\label{eq:W_p-(1,2]}
W_p(\Omega_0) = \int_{\partial\Omega_0} |y|^p\ dS \leq \sum_{i=1}^n \int_{\partial\Omega_0} |y_i|^p\ dS.    
\end{equation}
When \(p\in(2,n]\), convexity of \(t\mapsto t^{p/2}\) on \([0,\infty)\) gives
\[
\left(\frac{1}{n} \sum_{i=1}^n y_i^2 \right)^{p/2} \leq \frac{1}{n} \sum_{i=1}^n (y_i^2)^{p/2}.
\]
Hence
\begin{equation}\label{eq:W_p-(2,n]}
W_p(\Omega_0) \leq n^{p/2 - 1}\sum_{i=1}^n \int_{\partial\Omega_0} |y_i|^p dS.    
\end{equation}
Define
\[
A(n,p):=
\begin{cases}
1, & 1<p\leq 2,\\
n^{p/2-1}, & 2<p\leq n.
\end{cases}
\]
Combining \eqref{eq:W_p-(1,2]} and \eqref{eq:W_p-(2,n]}, we have
\begin{equation}\label{eq:wp_bound}
 W_p(\Omega_0)\leq  A(n,p)  \sum_{i=1}^n \int_{\partial\Omega_0} |y_i|^p\ dS.
\end{equation}

Combining \eqref{eq:key-inequ-thm-proof} with \eqref{eq:wp_bound}, together with \eqref{eq:eigenvalue-trans-invar} and $V(\Omega_0)=V(\Omega)$, we obtain
\begin{equation}\label{eq:sigma_W_bound}
\sigma_{1,p}(\Omega_0) W_p(\Omega_0) \leq A(n,p)  n V(\Omega_0),
\qquad\text{i.e.,}\qquad
\sigma_{1,p}(\Omega) \leq A(n,p)\ \frac{n V(\Omega)}{W_p(\Omega_0)}.
\end{equation}
By applying Theorem~\ref{thm:Lambda-lower}, we have
\[
\frac{W_p(\Omega_0)}{P(\Omega_0)\ V(\Omega_0)^{p/n}} =\Lambda_p(\Omega_0)\geq \omega_n^{-p/n}.
\]
Since $P(\Omega_0)=P(\Omega)$ and $V(\Omega_0)=V(\Omega)$, 
\[
\frac{nV(\Omega)}{W_p(\Omega_0)} \leq  \frac{n V(\Omega)}{\omega_n^{-p/n} P(\Omega) V(\Omega)^{p/n}} = \frac{n\omega_n^{p/n}}{P(\Omega)} V(\Omega)^{1-\tfrac{p}{n}}.
\]
Applying the classical isoperimetric inequality $P(\Omega) \geq n\omega_n^{\frac{1}{n}} V(\Omega)^{\frac{n-1}{n}}$,
we obtain  
\[
\frac{nV(\Omega)}{W_p(\Omega_0)} \leq \frac{n\omega_n^{p/n}}{P(\Omega)}\left(\frac{P(\Omega)}{n\omega_n^{1/n}}\right)^{\frac{n-p}{n-1}} = \left(\frac{n\omega_n}{P(\Omega)}\right)^{\frac{p-1}{n-1}} = \frac{1}{r(\Omega^*)^{p-1}},
\]
where $\Omega^*$ is a ball with the same perimeter as $\Omega$. By \eqref{eq:sigma_W_bound}, we have 
\[
\sigma_{1,p}(\Omega) \leq A(n,p)\frac{1}{r(\Omega^*)^{p-1}}.
\]

If equality holds, then $\Lambda_p(\Omega_0) = \omega_n^{-p/n}$ and equality holds in \eqref{eq:wp_bound}. $\Lambda_p(\Omega_0) = \omega_n^{-p/n}$ implies that $\Omega_0$ is a ball centered at the origin. Hence, equality in \eqref{eq:wp_bound} holds if and only if $p = 2$. This completes the proof.
\end{proof}

\section{Proof of Theorem~\ref{thm:Wentzell-V}}
\label{sec:Wentzell}
In this section, we prove Theorem~\ref{thm:Wentzell-V}.

\begin{proof}[Proof of Theorem~\ref{thm:Wentzell-V}]
By Proposition~\ref{prop:prelim-barycenter}, there exists a point $x_0 \in \mathbb{R}^n$ such that $\Omega_0=\Omega-x_0$ satisfies
\begin{equation}\label{eq:ortho}
\int_{\partial\Omega_0}|y_i|^{p-2}y_i\ dS=0, \qquad \text{for every } i=1,\dots,n.
\end{equation}
Hence, each coordinate function $y_i$ is admissible in \eqref{eq:Wentzell-def}. Testing with $u=y_i$, summing over $i$ and using
$|\nabla y_i|^p=1$, $|\nabla_\tau y_i|^p=(1-\nu_i^2)^{p/2}$, we obtain
\begin{equation}\label{eq:sum}
\mu_{1,p}(\Omega_0,\beta)\sum_{i=1}^n\int_{\partial\Omega_0}|y_i|^p\ dS \leq nV(\Omega_0)+\beta\sum_{i=1}^n\int_{\partial\Omega_0}(1-\nu_i^2)^{p/2}\ dS.
\end{equation}

For the tangential term we use that $t\mapsto t^{p/2}$ is concave on $[0,1]$ when $p\leq2$. Then Jensen's inequality gives
\[
\frac{1}{n}\sum_{i=1}^n(1-\nu_i^2)^{p/2} \leq \left(\frac{1}{n}\sum_{i=1}^n(1-\nu_i^2)\right)^{p/2} = \left(\frac{n-1}{n}\right)^{p/2}.
\]
When $p \in (2, n]$, since the function $t\mapsto t^{p/2}$ is convex on $[0,1]$, we have
\[
 \sum_{i = 1}^n (1-\nu_i^2)^{p/2} \leq \sum_{i= 1}^n (1-\nu_i^2) = (n-1). 
\]
Set
\[
 B(n, p):= 
  \begin{cases}
    n\left(\frac{n-1}{n}\right)^{p/2}, \quad &1< p \leq 2, \\
    n-1, &2<p \leq n.
  \end{cases}
\]
We have 
\begin{equation}\label{eq:tang-est}
\sum_{i=1}^n\int_{\partial\Omega_0}(1-\nu_i^2)^{p/2}\ dS \leq B(n, p)P(\Omega_0).
\end{equation}

For the left‑hand side of \eqref{eq:sum}, the same distinction as in the proof of Theorem~\ref{thm:main-bound}
gives
\[
W_p(\Omega_0) \leq A(n,p) \sum_{i=1}^n \int_{\partial\Omega_0} |y_i|^p\ dS,
\]
where $A(n,p)=1$ for $1<p\leq 2$ and $A(n,p)=n^{p/2-1}$ for $2<p\leq n$. Inserting this and \eqref{eq:tang-est} into \eqref{eq:sum} and dividing by $W_p(\Omega_0)>0$, we obtain
\[
\mu_{1,p}(\Omega_0,\beta)\leq A(n,p)\frac{nV(\Omega_0)+\beta B(n, p)P(\Omega_0)}{W_p(\Omega_0)}.
\]

Translation invariance of $V,P$ and $\mu_{1,p}$ gives $V(\Omega_0)=V(\Omega)$, $P(\Omega_0)=P(\Omega)$ and $\mu_{1,p}(\Omega_0,\beta)=\mu_{1,p}(\Omega,\beta)$. By applying Theorem~\ref{thm:Lambda-lower}, we have
\[
W_p(\Omega_0)\geq\omega_n^{-p/n}P(\Omega)\ V(\Omega)^{p/n}.
\]
Substituting this into the estimate above, we obtain
\begin{equation}\label{eq:mu-final}
\mu_{1,p}(\Omega,\beta)\leq A(n,p)\left(n\omega_n^{p/n}\frac{V(\Omega)^{1-p/n}}{P(\Omega)} +\beta B(n, p)\omega_n^{p/n}V(\Omega)^{-p/n}\right).
\end{equation}
Then the classical isoperimetric inequality $P(\Omega)\geq n\omega_n^{1/n}V(\Omega)^{(n-1)/n}$ implies
\[
\frac{V(\Omega)^{1-p/n}}{P(\Omega)} \leq \frac{V(\Omega)^{1-p/n}}{n\omega_n^{1/n}V(\Omega)^{(n-1)/n}} =\frac{1}{n\omega_n^{1/n}}V(\Omega)^{-\frac{p-1}{n}}=\frac{1}{n\omega_n^{p/n}}\frac{1}{r(\Omega^*)^{p-1}},
\]
where $\Omega^*$ is a ball in $\mathbb{R}^n$ with the same volume as $\Omega$ centered at $0$. Hence,
\[
\mu_{1,p}(\Omega,\beta)\leq A(n,p)\left(\frac{1}{r(\Omega^*)^{p-1}}+\beta B(n, p)\frac{1}{r(\Omega^*)^p}\right).
\]
Equality case will be the same as in Theorem~\ref{thm:main-bound}. This completes the proof.
\end{proof}

\appendix\label{sec:appendix}
\section{Derivation of the first variation of \texorpdfstring{$\Lambda_p$}{Lambda	extunderscore p}}
In this appendix, We give a detailed derivation of equation \eqref{eq:Lambda-prime}. Let $\Omega$ be a bounded convex domain with $C^2$ boundary. For a smooth vector field $\theta\in C^\infty(\mathbb R^n,\mathbb R^n)$, consider the perturbed domains $\Omega_t:=(\mathrm{Id}+t\theta)\Omega$. We recall two classical shape differentiation formulas (see, e.g., \cite[Theorem~5.2.2]{HenrotPierre2005}, \cite[Section~2.11]{SZ92}). Although their precise statements involve Sobolev differentiability conditions on the pullbacks of the integrands, the functions used below ($f\equiv1$, $g\equiv1$ and $g(t,x)=|x|^p$) are smooth, so the necessary hypotheses are trivially satisfied. For a volume integrand $f$, set $J(t) = \int_{\Omega_t} f(t,x)\ dx$. The shape derivative of $J$ at $t=0$ is defined by the limit
\[
J'(0) = \frac{d}{dt}\bigg|_{t=0}\int_{\Omega_t} f(t,x)\ dx
      = \lim_{h\to 0} \frac{J(h)-J(0)}{h}.
\]
Under suitable regularity assumptions on $f$ and $\Omega$, this limit exists and equals the integral
\begin{equation}\label{thm:deriv-1}
\frac{d}{dt}\bigg|_{t=0}\int_{\Omega_t} f(t,x)\ dx
= \int_\Omega \left(\partial_t f(0,x) + \operatorname{div}(f(0,x)\ \theta)\right) dx.
\end{equation}
Similarly, for a boundary integrand $g$,
\begin{equation}\label{thm:deriv-2}
 \begin{split}
  &\mathrel{\phantom{=}}\frac{d}{dt}\bigg|_{t=0}\int_{\partial\Omega_t} g(t,x)\ dS \\
  &= \int_{\partial\Omega} \left(\partial_t g(0,x) + \langle\theta,\nu\rangle \left(\nabla g(0,x)\cdot\nu + (n-1)H\ g(0,x)\right)\right) dS.
 \end{split}
\end{equation}
Applying \eqref{thm:deriv-1} with $f\equiv1$ and \eqref{thm:deriv-2} with $g\equiv1$ immediately gives
\begin{equation}\label{eq:Vprime}
\frac{d}{dt}\bigg|_{t=0} V(\Omega_t) = \int_{\partial\Omega} \langle\theta,\nu\rangle\ dS
\end{equation}
and
\begin{equation}\label{eq:Pprime}
\frac{d}{dt}\bigg|_{t=0} P(\Omega_t) = (n-1)\int_{\partial\Omega} H\ \langle\theta,\nu\rangle\ dS. 
\end{equation}
For $W_p(\Omega_t)$ we use \eqref{thm:deriv-2} with $g(t,x)=|x|^p$. Since $\partial_t g=0$, we have
\begin{align}\label{eq:Wprime}
\frac{d}{dt}\bigg|_{t=0} W_p(\Omega_t)
&= \int_{\partial\Omega} \langle\theta,\nu\rangle
   \left( \nabla(|x|^p)\cdot\nu + (n-1)H|x|^p \right) dS \notag\\
&= \int_{\partial\Omega} \langle\theta,\nu\rangle
   \left( p|x|^{p-2}\langle x,\nu\rangle + (n-1)H|x|^p \right) dS.
\end{align}

Now differentiate $\Lambda_p(\Omega_t) = W_p(\Omega_t)/\left(P(\Omega_t)V(\Omega_t)^{p/n}\right)$ at $t=0$. We have
\begin{align*}
\frac{d}{dt}\bigg|_{t=0} \Lambda_p(\Omega_t) &= \frac{1}{P(\Omega) V(\Omega)^{p/n}}\frac{d}{dt}\bigg|_{t=0} W_p(\Omega_t)  \\
&\mathrel{\phantom{=}}- \frac{W_p(\Omega)}{P(\Omega)^2V(\Omega)^{p/n}}\frac{d}{dt}\bigg|_{t=0} P(\Omega_t)- \frac{p}{n}\frac{W_p(\Omega)}{P(\Omega) V(\Omega)^{p/n+1}}\frac{d}{dt}\bigg|_{t=0} V(\Omega_t).
\end{align*}
Inserting the expressions \eqref{eq:Vprime}, \eqref{eq:Pprime} and \eqref{eq:Wprime} and collecting the terms gives
\begin{equation*}
\begin{split}
\frac{d}{dt}\bigg|_{t=0} \Lambda_p(\Omega_t) &= \frac{1}{P(\Omega)V(\Omega)^{p/n}}\int_{\partial\Omega} \langle\theta,\nu\rangle \left(p|x|^{p-2}\langle x,\nu\rangle + (n-1)H|x|^p\right) dS \\
&\mathrel{\phantom{=}}- \frac{W_p(\Omega)}{P(\Omega)^2V(\Omega)^{p/n}}\int_{\partial\Omega} (n-1)H\langle\theta,\nu\rangle\ dS  \\
&\mathrel{\phantom{=}}- \frac{p}{n}\frac{W_p(\Omega)}{P(\Omega) V(\Omega)^{p/n+1}}\int_{\partial\Omega} \langle\theta,\nu\rangle\ dS \\
&= \frac{1}{P(\Omega)V(\Omega)^{p/n}}\int_{\partial\Omega} p\langle\theta,\nu\rangle\left(|x|^{p-2}\langle x,\nu\rangle - \frac{W_p(\Omega)}{nV(\Omega)}\right) dS \\
&\mathrel{\phantom{=}} + \frac{1}{P(\Omega)V(\Omega)^{p/n}}\int_{\partial\Omega} (n-1)H\langle\theta,\nu\rangle\left(|x|^p - \frac{W_p(\Omega)}{P(\Omega)}\right) dS.
\end{split}
\end{equation*}
This is precisely equation \eqref{eq:Lambda-prime}.

\section*{Acknowledgments}
The first author is supported by NSFC (Nos.~12101125 and 12371052) and the Fujian Alliance of Mathematics (No.~2024SXLMMS01). The second author is partially supported by the National Key R\&D Program of China (No.~2025YFA1017500).

\bibliographystyle{plain}
\bibliography{reference}
\end{document}